\documentclass[11pt, reqno]{amsart}

\usepackage{mathrsfs}
\usepackage{amsfonts}
\usepackage{amsthm}
\usepackage{amsmath}
\usepackage{url}

\newtheorem{thm}{Theorem}[section]
\newtheorem{lemma}[thm]{Lemma}
\newtheorem{cor}[thm]{Corollary}

\theoremstyle{definition}
\newtheorem{definition}[thm]{Definition}
\newtheorem{ex}[thm]{Example}

\theoremstyle{remark}
\newtheorem{remark}[thm]{Remark}
\newtheorem{extProblem}{Extremal Problem}[section]

\numberwithin{equation}{section}

\DeclareMathOperator{\rank}{rank}
\DeclareMathOperator{\Range}{Range}
\DeclareMathOperator{\ess}{ess}
\DeclareMathOperator{\trace}{trace}
\DeclareMathOperator{\dist}{dist}
\DeclareMathOperator{\ind}{ind}
\DeclareMathOperator{\codim}{codim}

\newcommand{\cA}{\mathcal{A}}
\newcommand{\cB}{\mathcal{B}}
\newcommand{\cF}{\mathcal{F}}

\newcommand{\cS}{\sigma}

\newcommand{\cU}{\mathcal{U}}

\newcommand{\sL}{\mathscr{L}}

\newcommand{\bS}{\textbf{\textsl{S}}}

\newcommand{\C}{\mathbb{C}}
\newcommand{\D}{\mathbb{D}}
\newcommand{\M}{\mathbb{M}}
\newcommand{\N}{\mathbb{N}}
\newcommand{\T}{\mathbb{T}}
\newcommand{\pP}{\mathbb{P}}

\newcommand{\dm}{d{\boldsymbol{m}}}
\newcommand{\clos}{{\rm clos}}
\newcommand{\Zero}{\mathbb{O}}
\newcommand{\Mydef}{\stackrel{\mbox{\footnotesize{\rm def}}}{=}}

\begin{document}
\title{On the sum of superoptimal singular values}

\author{Alberto A. Condori}
\address{Department of Mathematics, Michigan State University, East Lansing, MI 48824, USA}
\email{condoria@msu.edu}
 
\subjclass[2000]{Primary 47B35; Secondary 46E40, 47L20}
\keywords{Hankel operators, Toeplitz operators, best approximation, badly approximable matrix functions, 
superoptimal approximation, maximizing vector.}

\begin{abstract}
			In this paper, we study the following extremal problem and its relevance to the sum of the so-called 
			superoptimal singular values of a matrix function: Given an $m\times n$ matrix function $\Phi$, when 
			is there a matrix function $\Psi_{*}$ in the set $\cA_{k}^{n,m}$ such that
			\[	\int_{\T}\trace(\Phi(\zeta)\Psi_{*}(\zeta))\dm(\zeta)
					=\sup_{\Psi\in\cA_{k}^{n,m}}\left|	\int_{\T}\trace(\Phi(\zeta)\Psi(\zeta))\dm(\zeta)\right|?	\]
			The set $\cA_{k}^{n,m}$ is defined by
			\[	\cA_{k}^{n,m}\Mydef\left\{	\Psi\in H_{0}^{1}(\M_{n,m}): \|\Psi\|_{L^{1}(\M_{n,m})}\leq 1,
									\,\rank \Psi(\zeta)\leq k\;\mbox{ a.e. }\zeta\in\T	\right\}.	\]
			To address this extremal problem, we introduce Hankel-type operators on spaces of matrix functions and 
			prove that this problem has a solution if and only if the corresponding Hankel-type operator has a 
			maximizing vector.  The main result of this paper is a characterization of the smallest number $k$ for which
			\[	\int_{\T}\trace(\Phi(\zeta)\Psi(\zeta))\dm(\zeta)	\]
			equals the sum of all the superoptimal singular values of an admissible matrix function $\Phi$ (e.g. a 
			continuous matrix function) for some function $\Psi\in\cA_{k}^{n,m}$.  Moreover, we provide a representation 
			of any such function $\Psi$ when $\Phi$ is an admissible very badly approximable unitary-valued 
			$n\times n$ matrix function.
\end{abstract}

\maketitle

{\centering \section{Introduction}}

The problem of best analytic approximation for a given $m\times n$ matrix-valued bounded function $\Phi$ 
on the unit circle $\T$ is to find a bounded analytic function $Q$ such that
\[	\|\Phi-Q\|_{L^{\infty}(\M_{m,n})}=\inf\{\|\Phi-F\|_{L^{\infty}(\M_{m,n})}: F\in H^{\infty}(\M_{m,n})\}.	\]
Throughout, 
\[	\|\Psi\|_{L^{\infty}(\M_{m,n})}\Mydef\ess\sup_{\zeta\in\T}\|\Psi(\zeta)\|_{\M_{m,n}},	\]
$\M_{m,n}$ denotes the space of $m\times n$ matrices equipped with the operator norm $\|\cdot\|_{\M_{m,n}}$
(of the space of linear operators from $\C^{n}$ to $\C^{m}$), and $H^{\infty}(\M_{m,n})$ denotes the space of 
bounded analytic $m\times n$ matrix-valued functions on $\T$.

It is well-known that, unlike scalar-valued functions, a polynomial matrix function $\Phi$ may have many best 
analytic approximants.  Therefore it is natural to impose additional conditions in order to distinguish a  
``very best'' analytic approximant among all best analytic approximants.  To do so here, we use the notion of 
superoptimal approximation by bounded analytic matrix functions.

\smallskip

\subsection{Superoptimal approximation and very badly approximable matrix functions}\label{superApproxSection}

Recall that for an $m\times n$ matrix $A$, the $j$th-\emph{singular value} $s_{j}(A)$, $j\geq 0$, is defined to be 
the distance from $A$ to the set of matrices of rank at most $j$ under the operator norm.  More precisely,
\[	s_{j}(A)=\inf\{\|A-B\|_{\M_{m,n}}: B\in\M_{m,n}\mbox{ such that }\rank B\leq j \}.	\]
Clearly, $s_{0}(A)=\|A\|_{\M_{m,n}}$.

\begin{definition}
			Let $\Phi\in L^{\infty}(\M_{m,n})$.  For $k\geq0$, we define the sets $\Omega_{k}=\Omega_{k}(\Phi)$ by
			\begin{align*}
						\Omega_{0}(\Phi)&=\left\{	F\in H^{\infty}(\M_{m,n}): F \mbox{ minimizes }\ess\sup_{\zeta\in\T}\|\Phi(\zeta)-F(\zeta)\|_{\M_{m,n}}\right\},
																\mbox{ and}\\
						\Omega_{j}(\Phi)&=\left\{	F\in\Omega_{j-1}: F \mbox{ minimizes }\ess\sup_{\zeta\in\T}s_{j}(\Phi(\zeta)-F(\zeta))	\right\}\mbox{ for }j>0.
			\end{align*}
			Any function $\displaystyle{F\in\bigcap_{k\geq0}\Omega_{k}=\Omega_{\min\{m,n\}-1}}$ is called a \emph{superoptimal 
			approximation to $\Phi$} by bounded analytic matrix functions.  In this case, the \emph{superoptimal singular 
			values of $\Phi$} are defined by 
			\[	t_{j}=t_{j}(\Phi)=\ess\sup_{\zeta\in\T}s_{j}((\Phi-F)(\zeta))\mbox{ for }j\geq 0.	\]
			Moreover, if the zero matrix function $\Zero$ belongs to $\Omega_{\min\{m,n\}-1}$, we say that \emph{$\Phi$ 
			is very badly approximable}.			
\end{definition}

Notice that any function $F\in \Omega_{0}$ is a best analytic approximation to $\Phi$.  Also, any very badly
approximable matrix function is the difference between a bounded matrix function and its superoptimal approximant.

It turns out that Hankel operators on Hardy spaces play an important role in the study of superoptimal approximation.
For a matrix function $\Phi\in L^{\infty}(\M_{m,n})$, we define the \emph{Hankel operator} $H_{\Phi}$ by
\[	H_{\Phi}f=\pP_{-}\Phi f,\,\mbox{ for }f\in H^{2}(\C^{n}),	\]
where $\pP_{-}$ denotes the orthogonal projection from $L^{2}(\C^{m})$ onto $H^{2}_{-}(\C^{m})\Mydef
L^{2}(\C^{m})\ominus H^{2}(\C^{m})$.

When studying superoptimal approximation, we only consider bounded matrix functions that are admissible.  A matrix 
function $\Phi\in L^{\infty}(\M_{m,n})$ is said to be \emph{admissible} if the essential norm $\|H_{\Phi}\|_{\rm e}$ of 
the Hankel operator $H_{\Phi}$ is strictly less than the smallest non-zero superoptimal singular value of $\Phi$.  As usual, 
the essential norm of a bounded linear operator $T$ between Hilbert spaces is defined by
\[	\|T\|_{\rm e}\Mydef\{ \|T-K\|: K\emph{ is compact }\}.	\]
Note that any continuous matrix function $\Phi$ is admissible, as the essential norm of $H_{\Phi}$ equals zero in this 
case.  Moreover, in the case of scalar-valued functions, to say that a function $\varphi$ is admissible simply means
that $\|H_{\varphi}\|_{\rm e}<\|H_{\varphi}\|$.

It is known that if $\Phi$ is an admissible matrix function, then $\Phi$ has a unique superoptimal approximation 
$Q$ by bounded analytic matrix functions.  Moreover, the functions $\zeta\mapsto s_{j}((\Phi-Q)(\zeta))$ equal
$t_{j}(\Phi)$ a.e. on $\T$ for each $j\geq 0$. These results were first proved in \cite{PY} for the special case
$\Phi\in(H^{\infty}+C)(\M_{m,n})$ (i.e. matrix functions which are a sum of a bounded analytic matrix function and 
a continuous matrix function), and shortly after proved for the class of admissible matrix functions in \cite{PT}.  

While it is possible to compute the superoptimal singular values of a given matrix function in concrete examples, it 
is not known how to verify if a matrix function that is not continuous is admissible or not.  Thus a complete
characterization of the smallest non-zero superoptimal singular value of a given matrix function is an important
problem for superoptimal approximation.  This remains an open problem.

We refer the reader to Chapter 14 of \cite{Pe1} which contains proofs to all of the previously mentioned results and 
many other interesting results concerning superoptimal approximation.\smallskip

\subsection{An extremal problem}

Throughout this note, $\boldsymbol{m}$ denotes normalized Lebesgue measure on $\T$ so that $\boldsymbol{m}(\T)=1$.

\begin{definition}
			Let $m,n>1$ and $1\leq k\leq\min\{m,n\}$.  For $\Phi\in L^{\infty}(\M_{m,n})$, we define $\cS_{k}(\Phi)$ by
			\begin{equation}\label{supQuantity}
						\cS_{k}(\Phi)\Mydef\sup_{\Psi\in \cA_{k}^{n,m}}\left|\int_{\T}\trace(\Phi(\zeta)\Psi(\zeta))\dm(\zeta)\right|,
			\end{equation}
			where
			\[	\cA_{k}^{n,m}=\left\{	\Psi\in H_{0}^{1}(\M_{n,m}): \|\Psi\|_{H_{0}^{1}(\M_{n,m})}\leq 1\,\mbox{ and }
										\;\rank \Psi(\zeta)\leq k\;\mbox{ a.e. }\zeta\in\T	\right\}.	\]
			Whenever $n=m$, we use the notation $\cA_{k}^{n}\Mydef\cA_{k}^{n,m}$.
\end{definition}

We are interested in the following extremal problem:\smallskip

\begin{extProblem}\label{extremal}
\emph{For a matrix function $\Phi\in L^{\infty}(\M_{m,n})$, when is there a matrix function $\Psi\in\cA_{k}^{n,m}$ 
such that}
\[	\int_{\T}\trace(\Phi(\zeta)\Psi(\zeta))\dm(\zeta)=\cS_{k}(\Phi)?	\]
\end{extProblem}

The importance of this problem arose from the following observation due to Peller \cite{Pe3}.

\begin{thm}\label{SInequality}
			Let $1\leq k\leq\min\{m,n\}$. If $\Phi\in L^{\infty}(\M_{m,n})$ is admissible, then
			\begin{equation}\label{traceIneq}
						\cS_{k}(\Phi)\leq	 t_{0}(\Phi)+\ldots+t_{k-1}(\Phi).
			\end{equation}
\end{thm}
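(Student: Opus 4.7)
The plan is to reduce the supremum over $\Psi$ to an integral involving the difference $\Phi-Q$, where $Q$ denotes the (unique, by admissibility) superoptimal analytic approximant of $\Phi$, and then to apply a pointwise trace inequality together with the rank constraint on $\Psi$.

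First, I would argue that for any $\Psi\in\cA_k^{n,m}$,
\[
\int_{\T}\trace(\Phi(\zeta)\Psi(\zeta))\dm(\zeta)
=\int_{\T}\trace\bigl((\Phi-Q)(\zeta)\Psi(\zeta)\bigr)\dm(\zeta).
\]
This amounts to showing that $\int_\T\trace(Q\Psi)\dm=0$. Since $Q\in H^\infty(\M_{m,n})$ and $\Psi\in H_0^1(\M_{n,m})$, the matrix product $Q\Psi$ lies in $H_0^1(\M_{m,m})$, so its trace lies in $H_0^1$ and integrates to zero against $\dm$. This is the cleanest step and I don't expect trouble here.

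The decisive step is pointwise: by admissibility, the results of Peller--Young and Peller--Treil cited in the excerpt guarantee that $s_j((\Phi-Q)(\zeta))=t_j(\Phi)$ a.e.\ on $\T$ for every $j\geq 0$. Combining this with the standard matrix trace inequality
\[
|\trace(AB)|\leq\sum_{j\geq 0}s_j(A)\,s_j(B)
\]
and the rank constraint $\rank\Psi(\zeta)\leq k$, which forces $s_j(\Psi(\zeta))=0$ for $j\geq k$, I obtain a.e.
\[
|\trace\bigl((\Phi-Q)(\zeta)\Psi(\zeta)\bigr)|
\leq\sum_{j=0}^{k-1}t_j(\Phi)\,s_j(\Psi(\zeta))
\leq\Bigl(\sum_{j=0}^{k-1}t_j(\Phi)\Bigr)s_0(\Psi(\zeta)).
\]

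Finally, I integrate over $\T$ and use that $\int_{\T}s_0(\Psi(\zeta))\dm(\zeta)\leq\|\Psi\|_{L^1(\M_{n,m})}\leq 1$ (since the operator norm is bounded by the trace norm, and either natural normalization of $\|\cdot\|_{L^1(\M_{n,m})}$ dominates the $L^1$ norm of $s_0(\Psi)$). Taking the supremum over $\Psi\in\cA_k^{n,m}$ then delivers \eqref{traceIneq}. The main obstacle, in my view, is the careful use of the admissibility hypothesis: it is precisely admissibility that both singles out the unique superoptimal $Q$ and guarantees that the functions $s_j(\Phi-Q)$ are constant a.e., turning the pointwise trace inequality into the desired clean bound by $\sum_{j=0}^{k-1}t_j(\Phi)$. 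Everything else is either a duality/orthogonality calculation or a standard Schatten-norm estimate.
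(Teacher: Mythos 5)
Your proposal is correct and follows essentially the same route as the paper: reduce to $\Phi-Q$ via the orthogonality $\int_\T\trace(Q\Psi)\dm=0$, use the admissibility fact that $s_j((\Phi-Q)(\zeta))=t_j(\Phi)$ a.e., apply a pointwise singular-value estimate exploiting $\rank\Psi(\zeta)\leq k$, and integrate against $\|\Psi\|_{L^1}\leq 1$. The only cosmetic difference is that you invoke von Neumann's inequality $|\trace(AB)|\leq\sum_j s_j(A)s_j(B)$ directly, whereas the paper passes through $|\trace(AB)|\leq\|AB\|_{\bS_1^m}\leq\bigl(\sum_{j=0}^{k-1}s_j(A)\bigr)\|B\|$; these are interchangeable here.
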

\begin{proof}
			Let $\Psi\in\cA_{k}^{n,m}$.  We may assume, without loss of generality, that $\Phi$ is very badly approximable.
			Indeed,
			\[	\int_{\T}\trace(\Phi(\zeta)\Psi(\zeta))\dm(\zeta)=\int_{\T}\trace((\Phi-Q)(\zeta)\Psi(\zeta))\dm(\zeta)	\]
			holds for any $Q\in H^{\infty}(\M_{m,n})$, and so we may replace $\Phi$ with $\Phi-Q$ if necessary, where $Q$ 
			is the superoptimal approximation to $\Phi$ in $H^{\infty}(\M_{m,n})$.
			
			Let $\bS_{1}^{m}$ denote the collection of $m\times m$ matrices equipped with the \emph{trace norm}
			$\|A\|_{\bS_{1}^{m}}=\trace(A^{*}A)^{1/2}=\sum_{j\geq 0}s_{j}(A)$.  
			
			It follows from the well-known identity $|\trace(A)|\leq\|A\|_{\bS_{1}^{m}}$ that the inequalities
			\[	|\trace(\Phi(\zeta)\Psi(\zeta))|\leq\|\Phi(\zeta)\Psi(\zeta)\|_{\bS_{1}^{m}}\leq\left(\sum_{j=0}^{k-1}s_{j}(\Phi(\zeta))\right)
					\|\Psi(\zeta)\|_{\M_{n,m}}	\]
			hold for a.e. $\zeta\in\T$.  Thus,
			\begin{align}\label{traceIneqTwo}
						\left|\int_{\T}\trace(\Phi(\zeta)\Psi(\zeta))\dm(\zeta)\right|&\leq\int_{\T}\left(\sum_{j=0}^{k-1}s_{j}(\Phi(\zeta))\right)
																																					\|\Psi(\zeta)\|_{\M_{n,m}}\dm(\zeta)\nonumber\\
																															&\leq\int_{\T}\left(\sum_{j=0}^{k-1}t_{j}(\Phi)\right)
																																					\|\Psi(\zeta)\|_{\M_{n,m}}\dm(\zeta)\nonumber\\
																															&\leq\left(\sum_{j=0}^{k-1}t_{j}(\Phi)\right)
																																					\|\Psi\|_{L^{1}(\M_{n,m})}\nonumber\\
																															&\leq\sum_{j=0}^{k-1}t_{j}(\Phi),
			\end{align}
			because the singular values of $\Phi$ satisfy $s_{j}(\Phi(\zeta))=t_{j}(\Phi)$ for a.e. $\zeta\in\T$
			since $\Phi$ is very badly approximable.
\end{proof}

Before proceeding, let us observe that equality holds in $(\ref{traceIneq})$ for some simple cases.  Let 
$r$ be a positive integer and $t_{0}, t_{1},\ldots, t_{r-1}$ be positive numbers satisfying
\[	t_{0}\geq t_{1}\geq\ldots\geq t_{r-1}.	\]
Suppose $\Phi$ is an $n\times n$ matrix function of the form
\begin{equation}\label{diagPhi}
			\Phi\Mydef\left(
			\begin{array}{ccccc}
						t_{0}u_{0}	&	\Zero				&	\ldots	&	\Zero						&	\Zero\\
						\Zero				&	t_{1}u_{1}	&	\ldots	&	\Zero						&	\Zero\\
						\vdots			&	\vdots			&	\ddots	&	\vdots					&	\vdots\\
						\Zero				&	\Zero				&	\ldots	&	t_{r-1}u_{r-1}	&	\Zero\\
						\Zero				&	\Zero				&	\ldots	&	\Zero						&	\Phi_{\#}
			\end{array}\right),
\end{equation}
where $\|\Phi_{\#}\|_{L^{\infty}}\leq t_{r-1}$ and $u_{j}$ is a unimodular function of the form 
$u_{j}=\bar{z}\bar{\theta}_{j}\bar{h}_{j}/h_{j}$ with $\theta_{j}$ an inner function and $h_{j}$ 
an outer function in $H^{2}$ for $0\leq j\leq r-1$.
Without loss of generality, we may assume that $\|h_{j}\|_{L^{2}}=1$ for each $j$.  By setting
\begin{equation}\label{diagPsi}
			\Psi\Mydef 
						\left(\begin{array}{ccccc}
												z\theta_{0}h_{0}^{2}	&	\Zero									&	\ldots	&	\Zero											&	\Zero\\
												\Zero									&	z\theta_{1}h_{1}^{2}	&	\ldots	&	\Zero											&	\Zero\\
												\vdots								&	\vdots								&	\ddots	&	\vdots										&	\vdots\\
												\Zero									&	\Zero									&	\ldots	&	z\theta_{r-1}h_{r-1}^{2}	&	\Zero\\
												\Zero									&	\Zero									&	\ldots	&	\Zero											&	\Zero\\
									\end{array}\right),
\end{equation}
it can be seen that $\Psi\in H^{1}_{0}(\M_{n})$, $\rank \Psi(\zeta)=r$ a.e. on $\T$, $\|\Psi\|_{L^{1}(\M_{n})}=1$, and
\[	\int_{\T}{\rm trace}(\Phi(\zeta)\Psi(\zeta))\dm(\zeta)=t_{0}+\ldots+t_{r-1}.	\]
Thus we obtain that
\[	\cS_{r}(\Phi)=t_{0}(\Phi)+\ldots+t_{r-1}(\Phi).	\]

On the other hand, one cannot expect the inequality $(\ref{traceIneq})$ to become an equality in general.
After all, by the Hahn-Banach Theorem,
\begin{equation}\label{distanceFormula}
			\dist_{L^{\infty}(\bS_{1}^{n})}(\Phi, H^{\infty}(\M_{n}))=\cS_{n}(\Phi),
\end{equation}
and there are admissible very badly approximable $2\times 2$ matrix functions $\Phi$ for which the strict inequality
\begin{equation*}
			\dist_{L^{\infty}(\bS_{1}^{2})}(\Phi, H^{\infty}(\M_{2}))<t_{0}(\Phi)+t_{1}(\Phi)
\end{equation*}
holds.  For instance, consider the matrix function
\begin{equation*}
			\Phi	=\left(	\begin{array}{cc}
												\bar{z}	&	\Zero\\
												\Zero		&	\bar{z}									
									\end{array}\right)
						\frac{1}{\sqrt{2}}\left(	\begin{array}{cc}
												1		&	\bar{z}\\
												-z	&	1									
									\end{array}\right)
					=\frac{1}{\sqrt{2}}\left(	\begin{array}{cc}
																					\bar{z}	&	\bar{z}^{2}\\
																					-1			&	\bar{z}									
																		\end{array}\right).
\end{equation*}
Clearly, $\Phi$ has superoptimal singular values $t_{0}(U)=t_{1}(U)=1$.  Let
\begin{equation*}
			F=\frac{1}{\sqrt{2}}\left(\begin{array}{cc}
																			\Zero	&	\Zero\\
																			-1		&	\Zero
																\end{array}\right).
\end{equation*}
It is not difficult to verify that
\[	s_{0}((\Phi-F)(\zeta))=\frac{1}{2}\sqrt{3+\sqrt{5}}\,\mbox{ and }\;s_{1}((\Phi-F)(\zeta))=\frac{1}{2}\sqrt{3-\sqrt{5}}	\]
for all $\zeta\in\T$.  Therefore
\begin{equation}\label{strictIneq}
			\dist_{L^{\infty}(\bS_{1}^{2})}(\Phi, H^{\infty}(\M_{2}))\leq\|\Phi-F\|_{L^{\infty}(\bS_{1}^{2})}<2=t_{0}(\Phi)+t_{1}(\Phi).											
\end{equation}

\subsection{What is done in this paper?}\label{doneInThisPaper}

In virtue of Theorem \ref{SInequality} and the remarks proceeding it, one may ask whether it is possible to 
characterize the matrix functions $\Phi$ for which $(\ref{traceIneq})$ becomes an equality. So let $\Phi$ be an 
admissible $n\times n$ matrix function with a superoptimal approximant $Q$ in $H^{\infty}(\M_{n})$ for which 
equality in Theorem \ref{SInequality} holds with $k=n$.  In this case, it must be that
\[	\dist_{L^{\infty}(\bS_{1}^{n})}(\Phi,H^{\infty}(\M_{n}))=\sum_{j=0}^{n-1}t_{j}(\Phi)
		=\sum_{j=0}^{n-1}s_{j}((\Phi-Q)(\zeta))=\|\Phi-Q\|_{L^{\infty}(\bS_{1}^{n})}	\]
by $(\ref{distanceFormula})$ and thus the superoptimal approximant $Q$ must be a best approximant to $\Phi$ 
under the $L^{\infty}(\bS_{1}^{n})$ norm as well.  Hence, we are led to investigate the following problems:
\begin{enumerate}
			\item	For which matrix functions $\Phi$ does Extremal problem \ref{extremal} have a solution?
			\item	If $Q_{\$}$ is a best approximant to $\Phi$ under the $L^{\infty}(\bS_{1}^{n})$-norm, when does it 
						follow that $Q_{\$}$ is the superoptimal approximant to $\Phi$ in $L^{\infty}(\M_{n})$?
			\item	Can we find necessary and sufficient conditions on $\Phi$ to obtain equality in $(\ref{traceIneq})$ 
						of Theorem \ref{SInequality}?
\end{enumerate}

Before addressing these problems, we recall certain standard principles of functional analysis in Section 
\ref{prelim} that are used throughout the paper.  In particular, we give their explicit formulation for the spaces 
$L^{p}(\bS_{q}^{m,n})$.

In Section \ref{HankelSection}, we introduce the Hankel-type operators $H^{\{k\}}_{\Phi}$ on spaces of matrix 
functions and $k$-extremal functions, and prove that the number $\sigma_{k}(\Phi)$ equals the operator norm of 
$H^{\{k\}}_{\Phi}$.  We also show that Extremal problem \ref{extremal} has a solution if and only if 
the Hankel-type operator $H^{\{k\}}_{\Phi}$ has a maximizing vector, and thus answer question 1 in terms 
Hankel-type operators.

In Section \ref{sumSection}, we establish the main results of this paper concerning best approximation under the 
$L^{\infty}(\bS_{1}^{m,n})$ norm (Theorem \ref{traceIneqisEquality}) and the sum of superoptimal singular values 
(Theorem \ref{classification}).  The latter result characterizes the smallest number $k$ for which
\[	\int_{\T}\trace(\Phi(\zeta)\Psi(\zeta))\dm(\zeta)	\]
equals the sum of all non-zero superoptimal singular values for some function $\Psi\in\cA_{k}^{n,m}$.  These results 
serve as partial solutions to problems 2 and 3.  

Lastly, in Section \ref{unitarySection}, we restrict our attention to unitary-valued very badly approximable matrix 
functions.  For any such matrix function $U$, we provide a representation of any function $\Psi$ for which the formula
\[	\int_{\T}\trace(U(\zeta)\Psi(\zeta))\dm(\zeta)=n	\]
holds.

{\centering \section{Best approximation and dual extremal problems}\label{prelim}}

We now provide explicit formulation of some basic results concerning best
approximation in $H^{q}(\bS_{p}^{m,n})$ for functions in $L^{q}(\bS_{p}^{m,n})$ and the corresponding dual extremal
problem.  We first consider the general setting.

\smallskip

\subsection{Best approximation}\label{bestApproxSection}

\begin{definition}
			Let $X$ be a normed space, $M$ be a closed subspace of $X$, and $x_{0}\in X$.  We say that \emph{$m_{0}$ is a 
			best approximant to $x_{0}$ in $M$} if $m_{0}\in M$ and
			\[	\|x_{0}-m_{0}\|_{X}=\dist(x_{0},M)\Mydef\inf\{ \|x_{0}-m\|_{X}: m\in M \}.	\]
\end{definition}

It is known that \emph{if $X$ is a reflexive Banach space and $M$ is a closed subspace of $X$, then each $x_{0}\in 
X\setminus M$ has a best approximant $m_{0}$ in $M$.}  

Two standard principles from functional analysis are used throughout this note.  Namely, if $X$ is a normed space 
with a linear subspace $M$, then 
\begin{align*}
			\sup_{m\in M, \|m\|\leq 1} |\Lambda_{0}(m)|&=\min\left\{ \|\Lambda_{0}-\Lambda\|: \Lambda\in M^{\bot } \right\}\;\mbox{ and }\\
			\max_{\Lambda\in M^{\bot}, \|\Lambda\|\leq 1} |\Lambda(x_{0})|&=\dist(x_{0},M)\;\mbox{ whenever $M$ is closed.}
\end{align*}

We now discuss these results in the case of the spaces $L^{q}(\bS_{p}^{m,n})$.

\subsection{The spaces $L^{q}(\bS_{p}^{m,n})$}\label{LqRemarks}

Let $1\leq q<\infty$ and $1\leq p\leq \infty$.  \emph{Let $p'$ denote the conjugate exponent to $p$}, 
i.e. $p'=p/(p-1)$.

Let $\bS_{p}^{m,n}$ denote the space of $m\times n$ matrices equipped with the Schatten-von Neumann norm 
$\|\cdot\|_{\bS_{p}^{m,n}}$, i.e. for $A\in\M_{m,n}$
\[	\|A\|_{\bS_{\infty}^{m,n}}\Mydef\|A\|_{\M_{m,n}}\,\mbox{ and }\;
		\|A\|_{\bS_{p}^{m,n}}\Mydef\left(\sum_{j\geq0}s_{j}^{p}(A)\right)^{1/p}\mbox{ for } 1\leq p<\infty.	\]
We also use the notation $\bS_{p}^{n}\Mydef\bS_{p}^{n,n}$.

If $X$ is a normed space of functions on $\T$ with norm $\|\cdot\|_{X}$, then $X(\bS_{p}^{m,n})$ denotes the space
of $m\times n$ matrix functions whose entries belong to $X$.  For $\Phi\in X(\bS_{p}^{m,n})$, we define
\[	\|\Phi\|_{X(\bS_{p}^{m,n})}\Mydef\|\rho\|_{X}\mbox{, where }
		\rho(\zeta)\Mydef\|\Phi(\zeta)\|_{\bS_{p}^{m,n}}\mbox{ for }\zeta\in\T.	\]

It is known that the dual space of $L^{q}(\bS_{p}^{m,n})$ is isometrically isomorphic to $L^{q'}(\bS_{p'}^{n,m})$ via 
the mapping $\Phi\mapsto \Lambda_{\Phi}$, where $\Phi\in L^{q'}(\bS_{p'}^{n,m})$ and
\[	\Lambda_{\Phi}(\Psi)=\int_{\T}\trace(\Phi(\zeta)\Psi(\zeta))\dm(\zeta)\;\mbox{ for }\Psi\in L^{q}(\bS_{p}^{m,n}).	\]
In particular, it follows that the annihilator of $H^{q}(\bS_{p}^{m,n})$ in $L^{q}(\bS_{p}^{m,n})$ is given by 
$H_{0}^{q'}(\bS_{p'}^{n,m})$, and so
\begin{equation*}
			\dist_{L^{q}(\bS_{p}^{m,n})}(\Phi,H^{q}(\bS_{p}^{m,n}))
			=\max_{\|\Psi\|_{H_{0}^{q'}(\bS_{p'}^{n,m})}\leq 1}\left|\int_{\T}\trace(\Phi(\zeta)\Psi(\zeta))\dm(\zeta)\right|,
\end{equation*}
by our remarks in Section \ref{bestApproxSection}. Moreover, if $1<q<\infty$, then $\Phi\in L^{q}(\bS_{p}^{m,n})$ has 
a best approximant $Q$ in $H^{q}(\bS_{p}^{m,n})$ (as $L^{q}(\bS_{p}^{m,n})$ is reflexive); that is,
\[	\|\Phi-Q\|_{L^{q}(\bS_{p}^{m,n})}=\dist_{L^{q}(\bS_{p}^{m,n})}(\Phi,H^{q}(\bS_{p}^{m,n})).	\]

The situation is similar in the case of $L^{\infty}(\bS_{p}^{m,n})$.  Indeed, $L^{\infty}(\bS_{p}^{m,n})$ is a dual
space, and so there is a $Q\in H^{\infty}(\bS_{p}^{m,n})$ such that
\[	\|\Phi-Q\|_{L^{\infty}(\bS_{p}^{m,n})}=\dist_{L^{\infty}(\bS_{p}^{m,n})}(\Phi, H^{\infty}(\bS_{p}^{m,n})).	\]
Again, it also follows from our remarks in Section \ref{bestApproxSection} that
\[	\dist_{L^{\infty}(\bS_{p}^{m,n})}(\Phi, H^{\infty}(\bS_{p}^{m,n}))=
		\sup_{\|\Psi\|_{H_{0}^{1}(\bS_{p'}^{n,m})}\leq 1}\left|\int_{\T}\trace(\Phi(\zeta)\Psi(\zeta))\dm(\zeta)\right|.	\]
However, an extremal function may fail to exist in this case even if $\Phi$ is a scalar-valued function.  An example
can be deduced from Section 1 of Chapter 1 in \cite{Pe1}.

{\centering \section{$\cS_{k}(\Phi)$ as the norm of a Hankel-type operator and $k$-extremal functions}\label{HankelSection}}

We now introduce the Hankel-type operators $H_{\Phi}^{\{k\}}$ which act on spaces of matrix functions.  We prove that 
the number $\cS_{k}(\Phi)$ equals the operator norm of $H_{\Phi}^{\{k\}}$ and characterize when $H_{\Phi}^{\{k\}}$ 
has a maximizing vector.  Recall that for an operator $T:X\rightarrow Y$ between normed spaces $X$ and $Y$, a vector 
$x\in X$ is called a \emph{a maximizing vector of $T$} if $x$ is nonzero and
\[	\|Tx\|_{Y}=\|T\|\cdot\|x\|_{X}.	\]

We begin by establishing the following lemma.

\begin{lemma}\label{factorPsi}
			Let $1\leq k\leq\min\{m,n\}$.  If $\Psi\in H^{1}(\M_{n,m})$ is such that $\rank \Psi(\zeta)=k$ for a.e.
			$\zeta\in\T$, then there are functions $R\in H^{2}(\M_{n,k})$ and $Q\in H^{2}(\M_{k,m})$ such that $R(\zeta)$
			has rank equal to $k$ for almost every $\zeta\in\T$,
			\[	\Psi=RQ\,\mbox{ and }\;
					\|R(\zeta)\|_{\M_{n,k}}^{2}=\|Q(\zeta)\|_{\M_{k,m}}^{2}=\|\Psi(\zeta)\|_{\M_{n,m}}\mbox{ for a.e. }\zeta\in\T.	\]
\end{lemma}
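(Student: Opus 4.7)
The statement is the matricial analog of the scalar factorization $H^{1}=H^{2}\cdot H^{2}$ with $|g|^{2}=|h|^{2}=|f|$, and the natural strategy is to prove it in three steps: first reduce from $H^{1}$ to $H^{2}$ by dividing out a scalar outer factor, then apply the Beurling--Lax--Halmos theorem to extract the rank-$k$ inner part, and finally combine the two pieces to obtain the factorization with matching norms.

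For the reduction, set $\omega(\zeta)\Mydef\|\Psi(\zeta)\|_{\M_{n,m}}$. I would verify first that $\log\omega\in L^{1}(\T)$: the bound $\log^{+}\omega\leq\omega\in L^{1}(\T)$ handles the positive part, while for any nonzero scalar entry $\psi_{ij}$ of $\Psi$ one has $\log|\psi_{ij}|\in L^{1}(\T)$ by the classical log-integrability of nonzero $H^{1}$ functions, and $\omega\geq|\psi_{ij}|$ controls $\log^{-}\omega$. Let $h$ be the outer function in $H^{2}$ with $|h|^{2}=\omega$ and put $\Psi_{0}=h^{-1}\Psi$. Each entry of $\Psi_{0}$ is an $H^{1}$ function divided by an outer $H^{2}$ function, hence lies in the Smirnov class $N^{+}$, and $\|\Psi_{0}(\zeta)\|_{\M_{n,m}}=\omega(\zeta)^{1/2}\in L^{2}(\T)$; by Smirnov's theorem, $\Psi_{0}\in H^{2}(\M_{n,m})$, and $\rank\Psi_{0}(\zeta)=k$ a.e., since $h$ vanishes only on a null set.

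I would then apply the Beurling--Lax--Halmos theorem to the closed shift-invariant subspace $\mathcal{M}\subseteq H^{2}(\C^{n})$ generated by the columns of $\Psi_{0}$. Because the pointwise rank of $\Psi_{0}$ is $k$ a.e., $\mathcal{M}$ has multiplicity $k$, giving $\mathcal{M}=\Theta H^{2}(\C^{k})$ for some inner $\Theta\in H^{\infty}(\M_{n,k})$ with $\Theta^{*}\Theta=I_{k}$ a.e. Writing each column of $\Psi_{0}$ as $\Theta$ applied to some $H^{2}(\C^{k})$ vector and assembling these yields $\Psi_{0}=\Theta F_{0}$ with $F_{0}\in H^{2}(\M_{k,m})$. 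Setting $R=h\Theta\in H^{2}(\M_{n,k})$ and $Q=F_{0}\in H^{2}(\M_{k,m})$, one has $RQ=h\Theta F_{0}=\Psi$ and $\rank R(\zeta)=k$ a.e. The identity $\Theta^{*}\Theta=I_{k}$ yields $\|\Theta(\zeta)\|_{\M_{n,k}}=1$ and $\|\Theta(\zeta)F_{0}(\zeta)\|_{\M_{n,m}}=\|F_{0}(\zeta)\|_{\M_{k,m}}$, from which
\[	\|R(\zeta)\|_{\M_{n,k}}^{2}=|h(\zeta)|^{2}=\omega(\zeta)=\|\Psi_{0}(\zeta)\|_{\M_{n,m}}^{2}=\|F_{0}(\zeta)\|_{\M_{k,m}}^{2}=\|Q(\zeta)\|_{\M_{k,m}}^{2}	\]
follows for a.e. $\zeta\in\T$, as required.

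The main technical obstacle is the Beurling--Lax--Halmos step, and in particular the identification of the multiplicity of $\mathcal{M}$ with the pointwise rank of $\Psi_{0}$; everything else amounts to standard scalar Hardy-space manipulations combined with the algebraic properties of column-isometric inner matrix functions.
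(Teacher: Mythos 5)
Your proof is correct and follows essentially the same route as the paper: both arguments hinge on the Beurling--Lax--Halmos theorem to produce an $n\times k$ inner function $\Theta$ whose pointwise range agrees a.e.\ with that of $\Psi$, followed by normalization with the outer function $h$ satisfying $|h|^{2}=\|\Psi(\cdot)\|_{\M_{n,m}}$. The only difference is the order of operations---you divide by $h$ first so as to invoke the standard $H^{2}$ version of Beurling--Lax--Halmos after a Smirnov-class argument, whereas the paper applies the invariant-subspace theorem directly to a subspace of $L^{1}(\C^{n})$ and divides by $h$ afterwards---which is immaterial.
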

\begin{proof}
			Consider the set
			\[	\mathscr{A}=\clos_{L^{1}(\C^{n})}\{f\in H^{1}(\C^n):\;f(\zeta)\in\Range\Psi(\zeta)\;\mbox{  a.e. on }\T\, \}.	\]
			Since $\mathscr{A}$ is a non-trivial completely non-reducing (closed) invariant subspace of $L^{1}(\C^{n})$, 
			there is an $n\times r$ inner function $\Theta$ such that $\mathscr{A}=\Theta H^{1}(\C^{r})$.  We first show that
			$r=k$. Let $\{e_{j}\}_{j=1}^{r}$ be an orthonormal basis for $\C^{r}$.  Then for almost every $\zeta\in\T$, we
			have that $\{\Theta(\zeta)e_{j}\}_{j=1}^{r}$ is a linearly independent set, since $\Theta$ is inner.  Moreover,
			$\{\Theta(\zeta)e_{j}\}_{j=1}^{r}$ is a basis for $\Range\Theta(\zeta)=\Range\Psi(\zeta)$ for a.e. $\zeta\in\T$.
			Since $\dim\Range \Psi(\zeta)=k$ a.e. on $\T$, it follows that
			$r=\dim\Range\Theta(\zeta)=\dim\Range\Psi(\zeta)=k$.  In particular, we obtain that
			\[	\mathscr{A}=\Theta H^{1}(\C^{k}).	\]
			
			By considering the columns of $\Psi$, it is easy to see that $\Psi=\Theta F$ for some $k\times m$ matrix function 
			$F\in H^{1}(\mathbb{M}_{k,m})$ as these columns belong to $\mathscr{A}$.  Let $h$ be an outer function in $H^{2}$ 
			such that $|h(\zeta)|=\|\Psi(\zeta)\|_{\mathbb{M}_{n,m}}^{1/2}$ for a.e. $\zeta\in\T$.  The conclusion of
			the lemma now follows by considering the functions
			\[	R=h\Theta\,\mbox{ and }\;Q=h^{-1}F.\qedhere	\]
\end{proof}

\begin{definition}
			Let $\Phi\in L^{\infty}(\M_{m,n})$, $1\leq k \leq\min\{m,n\}$, and $\rho:L^{2}(\bS_{1}^{m,k})\rightarrow 
			L^{2}(\bS_{1}^{m,k})/H^{2}(\bS_{1}^{m,k})$ denote the natural quotient map. We define the \emph{Hankel-type operator} 
			$H_{\Phi}^{\{k\}}: H^{2}(\M_{n,k})\rightarrow L^{2}(\bS_{1}^{m,k})/H^{2}(\bS_{1}^{m,k})$ by setting
			\[	H_{\Phi}^{\{k\}}F\Mydef\rho(\Phi F)\;\mbox{ for }F\in H^{2}(\M_{n,k}).	\]
\end{definition}

The norm in the quotient space $L^{2}(\bS_{1}^{m,k})/H^{2}(\bS_{1}^{m,k})$ is the natural one; that is, the norm 
of a coset equals the infimum of the $L^{2}(\bS_{1}^{m,k})$-norms of its elements.

\begin{thm}\label{HankelNorm}
			Let $1\leq k\leq\min\{m,n\}$. If $\Phi\in L^{\infty}(\M_{m,n})$, then
			\[	\cS_{k}(\Phi)=\left\|H_{\Phi}^{\{k\}}\right\|_{H^{2}(\M_{n,k})\rightarrow L^{2}(\bS_{1}^{m,k})/H^{2}(\bS_{1}^{m,k})}.	\]
\end{thm}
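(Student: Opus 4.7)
The plan is to evaluate $\|H_\Phi^{\{k\}}\|$ by dualizing the quotient norm and matching the result with the sup defining $\cS_k(\Phi)$.  From the duality recalled in Section \ref{prelim}, the dual of $L^2(\bS_1^{m,k})$ is $L^2(\bS_\infty^{k,m})$ via the trace pairing, and the annihilator of $H^2(\bS_1^{m,k})$ is $H_0^2(\bS_\infty^{k,m})$, so for each $F\in H^2(\M_{n,k})$,
\[
\|\rho(\Phi F)\|_{L^2(\bS_1^{m,k})/H^2(\bS_1^{m,k})}=\sup_{\substack{G\in H_0^2(\bS_\infty^{k,m})\\ \|G\|_{L^2}\leq 1}}\left|\int_{\T}\trace(\Phi FG)\dm\right|.
\]
Taking the supremum over unit-norm $F$ then identifies $\|H_\Phi^{\{k\}}\|$ with the sup of $\bigl|\int_{\T}\trace(\Phi FG)\dm\bigr|$ over all such $F$ and $G$ of unit norm, and the task reduces to identifying the products $FG$ with the members of $\cA_k^{n,m}$.

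For the inequality $\|H_\Phi^{\{k\}}\|\leq\cS_k(\Phi)$, I set $\Psi\Mydef FG$ for any admissible pair $F,G$.  Then $F\in H^2$ and $G\in H_0^2=zH^2$ force $\Psi\in zH^1=H_0^1(\M_{n,m})$; the matrix inequality $\rank(FG)\leq k$ holds pointwise; and Cauchy--Schwarz together with the submultiplicative bound $\|FG\|_{\M_{n,m}}\leq\|F\|_{\M_{n,k}}\|G\|_{\M_{k,m}}$ gives $\|\Psi\|_{L^1(\M_{n,m})}\leq\|F\|_{L^2}\|G\|_{L^2}\leq 1$.  Hence $\Psi\in\cA_k^{n,m}$ and $\int_{\T}\trace(\Phi FG)\dm=\int_{\T}\trace(\Phi\Psi)\dm$ is dominated by $\cS_k(\Phi)$.

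For the reverse inequality, let $\Psi\in\cA_k^{n,m}$.  Since the minors of an $H^1$ matrix function are scalar analytic, the rank of $\Psi$ is almost everywhere constant, equal to some $r\leq k$.  Because $\Psi\in zH^1$, the function $\bar z\Psi$ lies in $H^1(\M_{n,m})$ with rank $r$ a.e., and Lemma \ref{factorPsi} produces $R\in H^2(\M_{n,r})$ and $Q\in H^2(\M_{r,m})$ with $\bar z\Psi=RQ$ and $\|R(\zeta)\|_{\M_{n,r}}^2=\|Q(\zeta)\|_{\M_{r,m}}^2=\|\Psi(\zeta)\|_{\M_{n,m}}$ a.e.  Rewriting as $\Psi=R(zQ)$ and padding $R$ with zero columns and $zQ$ with zero rows (operations that preserve the operator norm) yields $F\in H^2(\M_{n,k})$ and $G\in H_0^2(\bS_\infty^{k,m})$ with $\Psi=FG$ and $\|F\|_{L^2}^2=\|G\|_{L^2}^2=\|\Psi\|_{L^1(\M_{n,m})}\leq 1$; the identity of the previous paragraph then bounds $\bigl|\int_{\T}\trace(\Phi\Psi)\dm\bigr|$ by $\|H_\Phi^{\{k\}}\|$.

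The main obstacle is the converse direction: a rank-$\leq k$ function in $H_0^1(\M_{n,m})$ is not \emph{a priori} a product of two analytic matrix factors with matched pointwise norms, and Lemma \ref{factorPsi} was proved precisely to supply this factorization with the correct intermediate dimension.  The remaining subtlety is cosmetic --- applying Lemma \ref{factorPsi} to $\bar z\Psi$ rather than $\Psi$ is the trick that shifts the $H_0$-condition onto the second factor $G=zQ$, which is exactly what places $G$ in the annihilator $H_0^2(\bS_\infty^{k,m})$ required by the Hankel-type pairing.
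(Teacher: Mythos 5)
Your proof is correct and follows essentially the same route as the paper: both arguments reduce $\|H_\Phi^{\{k\}}\|$ to a double supremum via the $L^2(\bS_1^{m,k})$--$L^2(\bS_\infty^{k,m})$ duality and then identify the products $FG$ with $\cA_k^{n,m}$ using Lemma \ref{factorPsi} and zero-padding. Your two refinements --- justifying that $\rank\Psi$ is a.e.\ constant via vanishing of minors, and applying Lemma \ref{factorPsi} to $\bar z\Psi$ so that the $H_0$-condition lands cleanly on the second factor --- are points the paper's proof passes over silently, but they do not change the argument's substance.
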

\begin{proof}
			Consider the collection
			\[	\cB_{k}^{n,m}=\{	RQ:\,\|R\|_{H^{2}(\M_{n,k})}\leq 1, \|Q\|_{H^{2}_{0}(\M_{k,m})}\leq1 	\}.	\]
			We claim that $\cB_{k}^{n,m}=\cA_{k}^{n,m}$.  Indeed if $\Psi\in \cA_{k}$ satisfies $\rank\Psi(\zeta)=j$ for 
			$\zeta\in\T$, where $1\leq j\leq k$, then by Lemma \ref{factorPsi} there are functions $R\in H^{2}(\M_{n,j})$ 
			and $Q\in H^{2}_{0}(\M_{j,m})$ such that $R(\zeta)$ has rank equal to $j$ for almost every $\zeta\in\T$,
			\[	\Psi=RQ\,\mbox{ and }\;
					\|R(\zeta)\|_{\M_{n,j}}^{2}=\|Q(\zeta)\|_{\M_{j,m}}^{2}=\|\Psi(\zeta)\|_{\M_{n,m}}\mbox{ for a.e. }\zeta\in\T.	\]
			We may now add zeros, if necessary, to obtain $n\times k$ and $k\times m$ matrix functions
			\begin{equation*}
						R_{\#}=(\,R\quad\Zero\,)\,\mbox{ and }\;
						Q_{\#}=\left(	\begin{array}{c}
																Q\\
																\Zero
													\end{array}\right),
			\end{equation*}
			respectively, from which it follows that $\Psi=R_{\#}Q_{\#}\in\cB_{k}^{n,m}$.  Therefore $\cA_{k}^{n,m}
			\subset \cB_{k}^{n,m}$. The reverse inclusion is trivial and so these sets are equal.
			  
			Hence
			\begin{align*}
						\cS_{k}(\Phi)	&=\sup_{\|R\|_{H^{2}(\mathbb{M}_{n,k})}\leq 1}\;\sup_{\|Q\|_{H^{2}_{0}(\mathbb{M}_{k,m})}\leq 1}
														\left|\int_{\T}{\rm trace}(\Phi(\zeta)R(\zeta)Q(\zeta))\dm(\zeta)\right|\\
													&=\sup_{\|R\|_{H^{2}(\mathbb{M}_{n,k})}\leq 1}\;\dist_{L^{2}(\bS_{1}^{m,k})}
															(\Phi R, H^{2}(\mathbb{M}_{m,k}))\\
													&=\|H^{\{k\}}_{\Phi}\|_{H^{2}(\mathbb{M}_{n,k})\rightarrow L^{2}(\bS_{1}^{m,k})/H^{2}(\bS_{1}^{m,k})}.\qedhere
			\end{align*}
\end{proof}

\begin{definition}
			Let $\Phi\in L^{\infty}(\M_{m,n})$ and $1\leq k \leq\min\{m,n\}$.  We say that $\Psi$ is \emph{a 
			$k$-extremal function for $\Phi$} if $\Psi\in\cA_{k}^{n,m}$ and
			\[	\cS_{k}(\Phi)=\int_{\T}\trace(\Phi(\zeta)\Psi(\zeta))\dm(\zeta).	\]
\end{definition}

Thus a matrix function $\Phi$ has a $k$-extremal function if and only if Extremal problem \ref{extremal} 
has a solution.  

We can now describe matrix functions that have a $k$-extremal function in terms of Hankel-type operators.

\begin{thm}\label{kExtMax}
			Let $\Phi\in L^{\infty}(\M_{m,n})$.  The matrix function $\Phi$ has a $k$-extremal function if and only if
			the Hankel-type operator $H^{\{k\}}_{\Phi}: H^{2}(\M_{n,k})\rightarrow L^{2}(\bS_{1}^{m,k})/H^{2}(\bS_{1}^{m,k})$ 
			has a maximizing vector.
\end{thm}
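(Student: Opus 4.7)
The plan is to leverage the identification $\cA_{k}^{n,m}=\cB_{k}^{n,m}$ established in the proof of Theorem \ref{HankelNorm}, together with the standard trace duality already used there. Specifically, for any $G\in L^{2}(\bS_{1}^{m,k})$, the Hahn--Banach theorem applied to the duality $L^{2}(\bS_{1}^{m,k})^{*}=L^{2}(\bS_{\infty}^{k,m})$ (with annihilator of $H^{2}(\M_{m,k})$ equal to $H^{2}_{0}(\M_{k,m})$) yields
\[
\dist_{L^{2}(\bS_{1}^{m,k})}(G,H^{2}(\M_{m,k}))=\sup_{\substack{Q\in H^{2}_{0}(\M_{k,m})\\ \|Q\|_{L^{2}}\leq 1}}\left|\int_{\T}\trace(G(\zeta)Q(\zeta))\dm(\zeta)\right|,
\]
and this supremum coincides with $\|H^{\{k\}}_{\Phi}R\|$ when $G=\Phi R$.

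For the necessity direction, I would start with a $k$-extremal function $\Psi$ for $\Phi$, and use $\cA_{k}^{n,m}=\cB_{k}^{n,m}$ to factor $\Psi=RQ$ with $R\in H^{2}(\M_{n,k})$, $Q\in H^{2}_{0}(\M_{k,m})$, and $\|R\|_{L^{2}},\|Q\|_{L^{2}}\leq 1$. The displayed duality then gives
\[
\cS_{k}(\Phi)=\left|\int_{\T}\trace(\Phi R Q)\dm\right|\leq \|Q\|_{L^{2}}\cdot\|H^{\{k\}}_{\Phi}R\|\leq \|R\|_{L^{2}}\|Q\|_{L^{2}}\left\|H^{\{k\}}_{\Phi}\right\|.
\]
Since $\|H^{\{k\}}_{\Phi}\|=\cS_{k}(\Phi)$ by Theorem \ref{HankelNorm}, equality must hold throughout, which forces $\|R\|_{L^{2}}=1$ and $\|H^{\{k\}}_{\Phi}R\|=\|H^{\{k\}}_{\Phi}\|\cdot\|R\|_{L^{2}}$; that is, $R$ is a maximizing vector.

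For the sufficiency direction, assume $R\in H^{2}(\M_{n,k})$ is a maximizing vector normalized to $\|R\|_{L^{2}}=1$, so that $\|H^{\{k\}}_{\Phi}R\|=\cS_{k}(\Phi)$. Applying the duality to $G=\Phi R\in L^{2}(\M_{m,k})\subset L^{2}(\bS_{1}^{m,k})$, the key remaining task---and really the only nontrivial step---is to show that the dual supremum is actually attained by some $Q\in H^{2}_{0}(\M_{k,m})$ with $\|Q\|_{L^{2}}\leq 1$. This is a routine weak compactness argument: the unit ball of the closed subspace $H^{2}_{0}(\M_{k,m})$ of the reflexive Hilbert space $L^{2}(\M_{k,m})$ is weakly compact, and the bounded linear functional $Q\mapsto \int_{\T}\trace(\Phi R\,Q)\dm$ is weakly continuous. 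After multiplying $Q$ by a suitable unimodular constant, the integral itself equals $\cS_{k}(\Phi)$. Setting $\Psi=RQ$, we obtain a function in $H^{1}_{0}(\M_{n,m})$ with $\rank \Psi(\zeta)\leq k$ a.e. and $\|\Psi\|_{L^{1}}\leq \|R\|_{L^{2}}\|Q\|_{L^{2}}\leq 1$, so $\Psi\in\cA_{k}^{n,m}$ is the desired $k$-extremal function. The principal obstacle, the existence of an extremal $Q$, is in fact automatic here because the duality takes place inside a Hilbert space rather than inside $L^{\infty}$ (where extremal functions may fail to exist, as remarked at the end of Section \ref{prelim}).
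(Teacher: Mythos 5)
Your argument is correct and follows essentially the same route as the paper: for necessity you factor the $k$-extremal function as $\Psi=RQ$ (via the identification $\cA_{k}^{n,m}=\cB_{k}^{n,m}$, i.e. Lemma \ref{factorPsi}) and squeeze the resulting chain of inequalities against $\|H^{\{k\}}_{\Phi}\|=\cS_{k}(\Phi)$ to exhibit the maximizing vector, and for sufficiency you solve the dual extremal problem for $\Phi R$ in $L^{2}(\bS_{1}^{m,k})$ and set $\Psi=RG$. The only cosmetic difference is that you justify attainment of the dual supremum by weak compactness of the unit ball of $H^{2}_{0}(\M_{k,m})$, whereas the paper appeals to the general Hahn--Banach fact recorded in Section \ref{bestApproxSection} that the dual extremal problem over the annihilator of a closed subspace is always attained; both justifications are valid.
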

\begin{proof}
			To simplify notation, let 
			\[	\left\|H_{\Phi}^{\{k\}}\right\|\Mydef
					\left\|H_{\Phi}^{\{k\}}\right\|_{H^{2}(\M_{n,k})\rightarrow L^{2}(\bS_{1}^{m,k})/H^{2}(\bS_{1}^{m,k})}.	\] 
			
			Suppose $\Psi$ is a $k$-extremal function for $\Phi$.  Let $j\in\N$ be such that $j\leq k$ and
			\[	\rank\Psi(\zeta)=j\mbox{ for a.e. } \zeta\in\T.	\]			
			By Lemma \ref{factorPsi}, there is an $R\in H^{2}(\M_{n,j})$ and a $Q\in H^{2}_{0}(\M_{j,m})$ such that 
			\[	\Psi=RQ\,\mbox{ and }\;
					\|R(\zeta)\|_{\M_{n,j}}^{2}=\|Q(\zeta)\|_{\M_{j,m}}^{2}=\|\Psi(\zeta)\|_{\M_{n,m}}\mbox{ for a.e. }\zeta\in\T.	\]
			As before, adding zeros if necessary, we obtain $n\times k$ and $k\times m$ matrix functions
			\begin{equation*}
						R_{\#}=(\,R\quad\Zero\,)\,\mbox{ and }\;
						Q_{\#}=\left(	\begin{array}{c}
																Q\\
																\Zero
													\end{array}\right),
			\end{equation*}
			respectively, so that $\Psi=R_{\#}Q_{\#}$ and
			\[	\|Q_{\#}(\zeta)\|_{\M_{k,m}}^{2}=\|Q(\zeta)\|_{\M_{j,m}}^{2}=\|\Psi(\zeta)\|_{\M_{n,m}}\mbox{ for a.e. }\zeta\in\T.	\]
					
			Let us show that $R_{\#}$ is a maximizing vector for $H_{\Phi}^{\{k\}}$.  Since $Q_{\#}$ belongs to $H^{2}_{0}(\M_{k,m})$,
			we have that for any $F\in H^{2}(\bS_{1}^{m,k})$
			\begin{align*}
						\cS_{k}(\Phi)&=\int_{\T}\trace(\Phi(\zeta)\Psi(\zeta))\dm(\zeta)
										=\int_{\T}\trace(\Phi(\zeta)R_{\#}(\zeta)Q_{\#}(\zeta))\dm(\zeta)\\
								&=\int_{\T}\trace((\Phi R_{\#}-F)(\zeta)Q_{\#}(\zeta))\dm(\zeta),
			\end{align*}
			and so
			\begin{align*}
						\cS_{k}(\Phi)&=\left|\int_{\T}\trace((\Phi R_{\#}-F)(\zeta)Q_{\#}(\zeta))\dm(\zeta)\right|\\
												&\leq\int_{\T}\left|\trace((\Phi R_{\#}-F)(\zeta)Q_{\#}(\zeta))\right|\dm(\zeta)\\
												&\leq\int_{\T}\|(\Phi R_{\#}-F)(\zeta)Q_{\#}(\zeta)\|_{\bS_{1}^{m}}\dm(\zeta)\\
												&\leq\int_{\T}\|(\Phi R_{\#}-F)(\zeta)\|_{\bS_{1}^{m,k}}\|Q_{\#}(\zeta)\|_{\M_{k,m}}\dm(\zeta)\\
												&\leq\|\Phi R_{\#}-F\|_{L^{2}(\bS_{1}^{m,k})}\|Q_{\#}\|_{L^{2}(\M_{k,m})}\\
												&=\|\Phi R_{\#}-F\|_{L^{2}(\bS_{1}^{m,k})}\|\Psi\|_{L^{1}(\M_{n,m})}\\
												&\leq\|\Phi R_{\#}-F\|_{L^{2}(\bS_{1}^{m,k})}.
			\end{align*}	
			By Theorem \ref{HankelNorm}, we obtain that
			\[	\cS_{k}(\Phi)\leq\left\|H_{\Phi}^{\{k\}}R_{\#}\right\|_{L^{2}(\bS_{1}^{m,k})/H^{2}(\bS_{1}^{m,k})}
					\leq\left\|H_{\Phi}^{\{k\}}\right\|=\cS_{k}(\Phi),	\]
			and therefore
			\[	\left\|H_{\Phi}^{\{k\}}\right\|=\left\|H_{\Phi}^{\{k\}}R_{\#}\right\|_{L^{2}(\bS_{1}^{m,k})/H^{2}(\bS_{1}^{m,k})}.	\]
			Thus, $R_{\#}$ is a maximizing vector of $H_{\Phi}$.

			Conversely, suppose the Hankel-type operator $H_{\Phi}^{\{k\}}$ has a maximizing vector $R\in H^{2}(\M_{n,k})$.
			Without loss of generality, we may assume that $\|R\|_{L^{2}(\M_{n,k})}=1$.	Then
			\[	\dist_{L^{2}(\bS_{1}^{m,k})}(\Phi R, H^{2}(\bS_{1}^{m,k}))=\left\|H_{\Phi}^{\{k\}}\right\|.	\]		
						
			By the remarks in Section \ref{LqRemarks}, there is a function $G\in H^{2}_{0}(\M_{k,m})$ such that
			$\|G\|_{L^{2}(\M_{k,m})}\leq 1$ and 
			\[	\int_{\T}\trace((\Phi R)(\zeta)G(\zeta))\dm(\zeta)=\dist_{L^{2}(\bS_{1}^{m,k})}(\Phi R, H^{2}(\bS_{1}^{m,k})).	\]
			On the other hand, since $R$ is a maximizing vector of $H_{\Phi}^{\{k\}}$, it follows from Theorem \ref{HankelNorm} that
			\[	\int_{\T}\trace(\Phi(\zeta) (RG)(\zeta))\dm(\zeta)=\left\|H_{\Phi}^{\{k\}}\right\|=\cS_{k}(\Phi).	\]
			Hence $\Psi\Mydef RG$ is a $k$-extremal function for $\Phi$.
\end{proof}

Before stating the next result, let us recall that the Hankel operator $H_{\Phi}:H^{2}(\C^{n})\rightarrow H^{2}_{-}(\C^{m})$ 
is defined by $H_{\Phi}f=\pP_{-}\Phi f$ for $f\in H^{2}(\C^{n})$.  The following is an immediate consequence of the previous 
theorem when $k=1$.  

\begin{cor}\label{vectorCase}
			Let $\Phi\in L^{\infty}(\M_{m,n})$.  The Hankel operator $H_{\Phi}$ has a maximizing vector if and only 
			if $\Phi$ has a $1$-extremal function.
\end{cor}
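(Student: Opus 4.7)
The plan is to recognize that the case $k=1$ of Theorem \ref{kExtMax} is literally the statement of the corollary, once one unwinds the definitions of the spaces involved. So the work reduces to checking that the Hankel-type operator $H_{\Phi}^{\{1\}}$ is canonically the same object as the ordinary Hankel operator $H_{\Phi}$.

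First, I would observe that $\M_{n,1}$ is simply $\C^{n}$, so $H^{2}(\M_{n,1}) = H^{2}(\C^{n})$, and hence the domains of $H_{\Phi}^{\{1\}}$ and $H_{\Phi}$ coincide. Next, an $m\times 1$ matrix is a column vector $v\in\C^{m}$ viewed as an operator $\C\to\C^{m}$, and its only possibly nonzero singular value is $|v|$; therefore
\[
\|v\|_{\bS_{1}^{m,1}} = s_{0}(v) = |v|_{\C^{m}},
\]
so $\bS_{1}^{m,1}$ is isometrically isomorphic to $\C^{m}$. It follows that
\[
L^{2}(\bS_{1}^{m,1}) = L^{2}(\C^{m}) \quad\text{and}\quad H^{2}(\bS_{1}^{m,1}) = H^{2}(\C^{m}),
\]
so the codomain of $H_{\Phi}^{\{1\}}$ is $L^{2}(\C^{m})/H^{2}(\C^{m})$, which is canonically isometrically isomorphic to $H^{2}_{-}(\C^{m})$ via the orthogonal projection $\pP_{-}$ (which sends each coset to its unique representative in $H^{2}_{-}(\C^{m})$).

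Under these identifications, the definition $H_{\Phi}^{\{1\}}F = \rho(\Phi F)$ corresponds precisely to $H_{\Phi}f = \pP_{-}(\Phi f)$ for $f\in H^{2}(\C^{n})$, so $H_{\Phi}^{\{1\}}$ and $H_{\Phi}$ are unitarily equivalent. In particular, one has a maximizing vector if and only if the other does. Invoking Theorem \ref{kExtMax} with $k=1$ then immediately yields that $\Phi$ admits a $1$-extremal function exactly when $H_{\Phi}$ has a maximizing vector.

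There is no genuine obstacle here; the only point that requires any care is the observation that the Schatten $1$-norm collapses to the Euclidean norm for $m\times 1$ matrices, which is what allows the Hankel-type operator to reduce to the familiar Hankel operator acting between the standard vectorial Hardy spaces.
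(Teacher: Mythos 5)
Your proposal is correct and follows the paper's own argument: both reduce the corollary to the case $k=1$ of Theorem \ref{kExtMax} and then identify $L^{2}(\bS_{1}^{m,1})/H^{2}(\bS_{1}^{m,1})$ with $H^{2}_{-}(\C^{m})$ via the natural isometric isomorphism, under which $H_{\Phi}^{\{1\}}$ becomes $H_{\Phi}$. You simply spell out in more detail the collapse of the Schatten norm on $m\times 1$ matrices to the Euclidean norm, which the paper leaves implicit.
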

\begin{proof}
			By Theorem \ref{kExtMax}, $\Phi$ has a 1-extremal function if and only if the Hankel-type operator
			$H^{\{1\}}_{\Phi}: H^{2}(\C^{n})\rightarrow L^{2}(\C^{m})/H^{2}(\C^{m})$ has a maximizing vector.  The conclusion
			now follows by considering the ``natural'' isometric isomorphism between the spaces $H_{-}^{2}(\C^{m})=L^{2}(\C^{m})
			\ominus H^{2}(\C^{m})$ and $L^{2}(\C^{m})/H^{2}(\C^{m})$.
\end{proof}

\begin{remark}\label{vectorCaseRem}
			It is worth mentioning that if a matrix function $\Phi$ is such that the Hankel operator $H_{\Phi}$ has a maximizing vector 
			(e.g. $\Phi\in (H^{\infty}+C)(\M_{n})$), then any $1$-extremal function $\Psi$ of $\Phi$ satisfies
			\[	\int_{\T}\trace(\Phi(\zeta)\Psi(\zeta))dm(\zeta)=\left\|H_{\Phi}\right\|=t_{0}(\Phi).	\]
			This is a consequence of Corollary \ref{vectorCase} and Theorem \ref{HankelNorm}.
\end{remark}	

\begin{remark}
			There are other characterizations of the class of bounded matrix functions $\Phi$ such that the Hankel operator $H_{\Phi}$ 
			has a maximizing vector.  These involve ``dual'' extremal functions and ``thematic'' factorizations.  We refer 
			the interested reader to \cite{Pe2} for details.
\end{remark}		

\begin{cor}\label{maxVectors}
			Let $1\leq k\leq \ell\leq n$ and $\Phi\in L^{\infty}(\M_{n})$.  Suppose that $\cS_{k}(\Phi)=\cS_{\ell}(\Phi)$.
			If $H_{\Phi}^{\{k\}}$ has a maximizing vector, then $H^{\{\ell\}}_{\Phi}$ also has a maximizing vector.
\end{cor}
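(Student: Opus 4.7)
The plan is to move once between the language of Hankel-type operators and that of $k$-extremal functions via Theorem~\ref{kExtMax}, and then exploit a trivial monotonicity of the classes $\cA_k^{n}$. By Theorem~\ref{kExtMax}, the hypothesis that $H_{\Phi}^{\{k\}}$ admits a maximizing vector is equivalent to the existence of a $k$-extremal function $\Psi$ for $\Phi$, and the desired conclusion is equivalent to exhibiting an $\ell$-extremal function for $\Phi$.

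The key observation is the inclusion $\cA_k^{n} \subseteq \cA_{\ell}^{n}$ whenever $k\leq\ell$, which is immediate from the definition: the only $k$-dependence is the pointwise rank bound $\rank\Psi(\zeta)\leq k$, and this of course forces $\rank\Psi(\zeta)\leq\ell$, while the membership in $H_{0}^{1}(\M_{n})$ and the norm bound $\|\Psi\|_{L^{1}(\M_{n})}\leq 1$ are unchanged. Given a $k$-extremal $\Psi$ for $\Phi$, this inclusion places $\Psi$ in $\cA_{\ell}^{n}$, and the chain
\[
\int_{\T}\trace(\Phi(\zeta)\Psi(\zeta))\dm(\zeta)=\cS_{k}(\Phi)=\cS_{\ell}(\Phi)
\]
(the first equality by $k$-extremality, the second by hypothesis) shows that the same $\Psi$ is simultaneously an $\ell$-extremal function for $\Phi$. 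A second application of Theorem~\ref{kExtMax}, now in the reverse direction, produces a maximizing vector for $H_{\Phi}^{\{\ell\}}$.

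There is essentially no obstacle in this argument: the corollary merely records that once the supremum $\cS_{k}(\Phi)$ has stopped growing by index $k$, any extremizer at level $k$ automatically serves as an extremizer at every level between $k$ and $\ell$, and Theorem~\ref{kExtMax} translates this between the two languages. The only minor housekeeping point to note is that the definition of $k$-extremality suppresses the absolute value appearing in the definition of $\cS_k(\Phi)$, which is harmless because $\cA_k^{n}$ is stable under multiplication by unimodular constants, so the supremum with absolute values coincides with the supremum of the signed quantity.
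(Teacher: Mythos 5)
Your argument is correct and is exactly the one the paper intends: the paper's proof consists of the single line ``This is an immediate consequence of Theorem \ref{kExtMax},'' and the details you supply (the inclusion $\cA_{k}^{n}\subseteq\cA_{\ell}^{n}$, the hypothesis $\cS_{k}(\Phi)=\cS_{\ell}(\Phi)$ promoting a $k$-extremal function to an $\ell$-extremal one, and the two applications of Theorem \ref{kExtMax}) are precisely what that line suppresses. Your housekeeping remark about the absolute value is also accurate and harmless.
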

\begin{proof}
			This is an immediate consequence of Theorem \ref{kExtMax}.
\end{proof}

{\centering \section{How about the sum of superoptimal singular values?}\label{sumSection}}

In this section, we prove in Theorem \ref{traceIneqisEquality} that equality is obtained in $(\ref{traceIneq})$ under 
some natural conditions.  

For the rest of this note, we assume that $m=n$.  

Consider the non-decreasing sequence $\cS_{1}(\Phi),\ldots,\cS_{n}(\Phi)$.  Recall that
\[	\cS_{n}(\Phi)=\dist_{L^{\infty}(\bS_{1}^{n})}(\Phi, H^{\infty}(\M_{n}))	\]	
and the distance on the right-hand side is in fact always attained, i.e. a best approximant $Q$ to $\Phi$ under 
the $L^{\infty}(\bS_{1}^{n})$ norm always exists as explained in Section \ref{LqRemarks}.\smallskip

\begin{thm}\label{bestApproxProp}
			Let $\Phi\in L^{\infty}(\M_{n})$ and $1\leq k\leq n$.  Suppose $Q$ is a best approximant to $\Phi$ in
			$H^{\infty}(\M_{n})$ under the $L^{\infty}(\bS_{1}^{n})$-norm.  If the Hankel-type operator $H_{\Phi}^{\{k\}}$
			has a maximizing vector $\cF$ in $H^{2}(\M_{n,k})$ and $\cS_{k}(\Phi)=\cS_{n}(\Phi)$, then
			\begin{enumerate}
						\item	$Q\cF$ is a best approximant to $\Phi \cF$ in $H^{2}$ under the $L^{2}(\bS_{1}^{n,k})$-norm,
						\item	for each $j\geq0$,
						\[	s_{j}((\Phi-Q)(\zeta)\cF(\zeta))=s_{j}((\Phi-Q)(\zeta))\|\cF(\zeta)\|_{\M_{n,k}}\mbox{ for a.e. }\zeta\in\T,	\]
						\item	$\displaystyle{\sum_{j=0}^{k-1}s_{j}((\Phi-Q)(\zeta)=\cS_{k}(\Phi)}$ holds for a.e. $\zeta\in\T$, and
						\item	$s_{j}((\Phi-Q)(\zeta))=0$ holds for a.e. $\zeta\in\T$ whenever $j\geq k$.
			\end{enumerate}
\end{thm}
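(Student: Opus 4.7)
The plan is to exploit a chain of pointwise inequalities bounding $\|(\Phi-Q)\cF\|_{L^{2}(\bS_{1}^{n,k})}$ from above and match it against a lower bound forced by the maximizing property of $\cF$. Under the hypothesis $\cS_{k}(\Phi)=\cS_{n}(\Phi)$, every link of that chain will be forced to saturate a.e., and the four conclusions can then be read off directly from these saturations.

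I would normalize so that $\|\cF\|_{L^{2}(\M_{n,k})}=1$. Since $\|H_{\Phi}^{\{k\}}\|=\cS_{k}(\Phi)$ by Theorem \ref{HankelNorm} and $\cF$ is a maximizing vector,
\[	\dist_{L^{2}(\bS_{1}^{n,k})}(\Phi\cF,H^{2}(\bS_{1}^{n,k}))=\left\|H_{\Phi}^{\{k\}}\cF\right\|=\cS_{k}(\Phi),	\]
so in particular $\|(\Phi-Q)\cF\|_{L^{2}(\bS_{1}^{n,k})}\geq \cS_{k}(\Phi)$. For the matching upper bound, I would invoke the Schatten estimate $s_{j}(AB)\leq s_{j}(A)\|B\|_{\M_{n,k}}$ termwise and sum to obtain, a.e.\ on $\T$,
\[	\|(\Phi-Q)(\zeta)\cF(\zeta)\|_{\bS_{1}^{n,k}}\leq \|\cF(\zeta)\|_{\M_{n,k}}\sum_{j=0}^{k-1}s_{j}((\Phi-Q)(\zeta))\leq \|\cF(\zeta)\|_{\M_{n,k}}\|(\Phi-Q)(\zeta)\|_{\bS_{1}^{n}}.	\]
Squaring, integrating, and using $\|\Phi-Q\|_{L^{\infty}(\bS_{1}^{n})}=\cS_{n}(\Phi)=\cS_{k}(\Phi)$ together with $\|\cF\|_{L^{2}(\M_{n,k})}=1$ then yields $\|(\Phi-Q)\cF\|_{L^{2}(\bS_{1}^{n,k})}\leq \cS_{k}(\Phi)$, establishing (1).

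The remaining statements follow by reading off the a.e.\ equality conditions from each of the three pointwise inequalities above. A technical point I would handle carefully is that $\cF$ is a nontrivial matrix $H^{2}$ function, so at least one entry is a nonzero scalar $H^{2}$ function, whose boundary values vanish only on a measure-zero subset of $\T$; hence $\|\cF(\zeta)\|_{\M_{n,k}}>0$ for a.e.\ $\zeta\in\T$. Dividing through by this quantity, the last of the three inequalities saturating gives $\|(\Phi-Q)(\zeta)\|_{\bS_{1}^{n}}=\cS_{n}(\Phi)$ a.e., and the middle one gives $\sum_{j=0}^{k-1}s_{j}((\Phi-Q)(\zeta))=\|(\Phi-Q)(\zeta)\|_{\bS_{1}^{n}}$ a.e.; together these deliver (3) and force the tail $s_{j}((\Phi-Q)(\zeta))=0$ for $j\geq k$ a.e., which is (4). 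The first inequality saturating means that the termwise Schatten bound $s_{j}(AB)\leq s_{j}(A)\|B\|$ must saturate for each $0\leq j\leq k-1$ (being a sum of nonnegative terms obeying a termwise inequality), giving (2) in that range; for $j\geq k$, both sides of (2) vanish by rank considerations and by (4). The main obstacle I anticipate is bookkeeping---correctly matching each saturated pointwise inequality to the corresponding conclusion, and rigorously justifying the step of dividing by $\|\cF(\zeta)\|$, which rests on the boundary behavior of nontrivial $H^{2}$ functions on $\T$.
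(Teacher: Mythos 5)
Your argument is correct, and its first half coincides with the paper's: you build the same chain of $L^{2}$ inequalities between $\cS_{k}(\Phi)=\|H_{\Phi}^{\{k\}}\cF\|$ and $\|\Phi-Q\|_{L^{\infty}(\bS_{1}^{n})}\|\cF\|_{L^{2}(\M_{n,k})}$, force every link to saturate, and read off (1), (2), and the a.e.\ identity $\|(\Phi-Q)(\zeta)\|_{\bS_{1}^{n}}=\cS_{k}(\Phi)$. Where you genuinely diverge is in how (3) and (4) are obtained. The paper's pointwise upper bound is the single step $\|(\Phi-Q)(\zeta)\cF(\zeta)\|_{\bS_{1}^{n,k}}\leq\|(\Phi-Q)(\zeta)\|_{\bS_{1}^{n}}\|\cF(\zeta)\|_{\M_{n,k}}$, and to get the rank collapse (4) it then runs a separate argument: it picks a $k$-extremal function $\Psi$ for $\Phi$ (available by Theorem \ref{kExtMax}), saturates a second chain to obtain $|\trace((\Phi-Q)(\zeta)\Psi(\zeta))|=\|(\Phi-Q)(\zeta)\|_{\bS_{1}^{n}}\|\Psi(\zeta)\|_{\M_{n}}$ a.e., and invokes Lemma \ref{BNPLemma} (the rank lemma adapted from \cite{BNP}) with $A=\Psi(\zeta)$ and $B=(\Phi-Q)(\zeta)$ to conclude $\rank(\Phi-Q)(\zeta)\leq k$. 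You instead interpolate the sharper intermediate bound $\|(\Phi-Q)(\zeta)\cF(\zeta)\|_{\bS_{1}^{n,k}}\leq\|\cF(\zeta)\|_{\M_{n,k}}\sum_{j=0}^{k-1}s_{j}((\Phi-Q)(\zeta))$ --- valid because the product is $n\times k$ and so has at most $k$ nonzero singular values --- and the saturation of that link, after dividing by $\|\cF(\zeta)\|_{\M_{n,k}}>0$ a.e., yields $\sum_{j=0}^{k-1}s_{j}((\Phi-Q)(\zeta))=\|(\Phi-Q)(\zeta)\|_{\bS_{1}^{n}}$ and hence (3) and (4) in one stroke. This buys a shorter proof that dispenses with the auxiliary extremal function and with Lemma \ref{BNPLemma} entirely; the only extra care required is the point you already flagged, namely that a nonzero matrix function in $H^{2}(\M_{n,k})$ has $\|\cF(\zeta)\|_{\M_{n,k}}>0$ for a.e.\ $\zeta\in\T$, which follows since some entry is a nonzero scalar $H^{2}$ function and such functions vanish only on a set of measure zero.
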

\begin{proof}
			By our assumptions, 
			\begin{align*}
						\|H^{\{k\}}_{\Phi}\|^{2}\|\cF\|_{L^{2}(\M_{n,k})}^{2}
															&=\|H^{\{k\}}_{\Phi}\cF\|^{2}_{L^{2}(\bS_{1}^{n,k})/H^{2}(\bS_{1}^{n,k})}
																=\|\rho(\Phi\cF)\|^{2}\\
														&=\|\rho((\Phi-Q)\cF)\|^{2}\\
														&\leq\|(\Phi-Q) \cF\|_{L^{2}(\bS_{1}^{n,k})}^{2}
																=\int_{\T}\|(\Phi-Q)(\zeta)\cF(\zeta)\|_{\bS_{1}^{n,k}}^{2}\dm(\zeta)\\
														&\leq\int_{\T}\|(\Phi-Q)(\zeta)\|_{\bS_{1}^{n}}^{2}\|\cF(\zeta)\|_{\M_{n,k}}^{2}\dm(\zeta)\\
														&\leq\|\Phi-Q\|_{L^{\infty}(\bS_{1}^{n})}^{2}\|\cF\|_{L^{2}(\M_{n,k})}^{2}
																=\cS_{k}(\Phi)^{2}\|\cF\|_{L^{2}(\M_{n,k})}^{2}.
			\end{align*}
			It follows from Theorem \ref{HankelNorm} that all inequalities are equalities.  In particular, we obtain that $Q\cF$ 
			is a best approximant to $\Phi Q$ under the $L^{2}(\bS_{1}^{n,k})$-norm since the first inequality is actually an 
			equality.  For almost every $\zeta\in\T$,
			\begin{align}
						\|(\Phi-Q)(\zeta)\cF(\zeta)\|_{\bS_{1}^{n}}&=\|(\Phi-Q)(\zeta)\|_{\bS_{1}^{n}}\|\cF(\zeta)\|_{\M_{n,k}}\mbox{ and }\label{first}\\
						\|(\Phi-Q)(\zeta)\|_{\bS_{1}^{n}}&=\|\Phi-Q\|_{L^{\infty}(\bS_{1}^{n})}=\cS_{k}(\Phi),\nonumber
			\end{align}
			because the second and third inequalities are equalities as well.  It follows from $(\ref{first})$ that for each $j\geq0$,
			\[	s_{j}((\Phi-Q)(\zeta)\cF(\zeta))=s_{j}((\Phi-Q)(\zeta))\|\cF(\zeta)\|_{\M_{n,k}}\mbox{ for a.e. }\zeta\in\T.	\]
			
			We claim that if $j\geq k$, then $s_{j}((\Phi-Q)(\zeta))=0$ for a.e. $\zeta\in\T$.  By Theorem \ref{kExtMax},
			we can choose a $k$-extremal function, say $\Psi$, for $\Phi$.  Since $\Psi$ belongs to $H_{0}^{1}(\M_{n})$,
			\begin{align*}
						\cS_{k}(\Phi)&=\int_{\T}\trace(\Phi(\zeta)\Psi(\zeta))\dm(\zeta)=\int_{\T}\trace((\Phi-Q)(\zeta)\Psi(\zeta))\dm(\zeta)\\
												&\leq\int_{\T}\|(\Phi-Q)(\zeta)\Psi(\zeta)\|_{\bS_{1}^{n}}\dm(\zeta)
													\leq\int_{\T}\|(\Phi-Q)(\zeta)\|_{\bS_{1}^{n}}\|\Psi(\zeta)\|_{\M_{n}}\dm(\zeta)\\
												&\leq\|\Phi-Q\|_{L^{\infty}(\bS_{1}^{n})}\|\Psi\|_{L^{1}(\M_{n})}\leq\|\Phi-Q\|_{L^{\infty}(\bS_{1}^{n})}=\cS_{k}(\Phi),
			\end{align*}
			and so all inequalities are equalities.  It follows that 
			\begin{equation}\label{eqBNP}
						|\trace((\Phi-Q)(\zeta)\Psi(\zeta))|=\|(\Phi-Q)(\zeta)\|_{\bS_{1}^{n}}\|\Psi(\zeta)\|_{\M_{n}}\mbox{ for a.e. }\zeta\in\T.
			\end{equation}
			
			In order to complete the proof, we need the following lemma.
			
			\begin{lemma}\label{BNPLemma}
						Let $A\in \M_{n}$ and $B\in\M_{n}$.  Suppose that $A$ and $B$ satisfy
						\[	|\trace(AB)|=\|A\|_{\M_{n}}\|B\|_{\bS_{1}^{n}}.	\]
						If $\rank A\leq k$, then $\rank B\leq k$ as well.
			\end{lemma}
			
			We first finish the proof of Theorem \ref{bestApproxProp} before proving Lemma \ref{BNPLemma}.
			
			It follows from $(\ref{eqBNP})$ and Lemma \ref{BNPLemma} that 
			\[	\rank((\Phi-Q)(\zeta))\leq k\,\mbox{ for a.e. }\zeta\in\T.	\]
			In particular, if $j\geq k$, then
			\[	s_{j}((\Phi-Q)(\zeta))=0\,\mbox{ for a.e. }\zeta\in\T,	\]
			and so
			\[	\sum_{j=0}^{k-1}s_{j}((\Phi-Q)(\zeta))=\|(\Phi-Q)(\zeta)\|_{\bS_{1}^{n}}=\cS_{k}(\Phi)\,\mbox{ for a.e. }\zeta\in\T.	\]
			This completes the proof.
\end{proof}

\begin{remark}
			Lemma \ref{BNPLemma} is a slight modification of Lemma 4.6 in \cite{BNP}.  Although the proof of Lemma \ref{BNPLemma} 
			given below is almost the same as that given in \cite{BNP} for Lemma 4.6, we include it for the convenience of the reader.  
\end{remark}

\begin{proof}[Proof of Lemma \ref{BNPLemma}]
			Let $B$ have polar decomposition $B=UP$ and set $C=AU$, where $P=(B^{*}B)^{1/2}$.  Let 
			$e_{1},\ldots,e_{n}$ be an orthonormal basis of eigenvectors for $P$ and $Pe_{j}=\lambda_{j}e_{j}$.
			It is easy to see that the following inequalities hold:
			\begin{align*}
						|\trace(AB)|&=|\trace(CP)|=\left|\sum_{j=1}^{n}(Pe_{j},C^{*}e_{j})\right|
													=\left|\sum_{j=1}^{n}\lambda_{j}(e_{j},C^{*}e_{j})\right|\\
												&=\left|\sum_{j=1}^{n}\lambda_{j}(Ce_{j},e_{j})\right|
													\leq\sum_{j=1}^{n}\lambda_{j}\left|(Ce_{j},e_{j})\right|
													\leq\sum_{j=1}^{n}\lambda_{j}\|Ce_{j}\|\\
												&\leq\|C\|_{\M_{n}}\sum_{j=1}^{n}\lambda_{j}.
			\end{align*}
			On the other hand, 
			\[	\|A\|_{\M_{n}}\|B\|_{\bS_{1}^{n}}=\|C\|_{\M_{m}}\|P\|_{\bS_{1}^{n}}=\|C\|_{\M_{n}}\sum_{j=1}^{n}\lambda_{j}	\]
			and so, by the assumption $|\trace(AB)|=\|A\|_{\M_{n}}\|B\|_{\bS_{1}^{n}}$, it follows that
			\[	\sum_{j=1}^{n}\lambda_{j}\|Ce_{j}\|=\|C\|_{\M_{n}}\sum_{j=1}^{n}\lambda_{j}.	\]
			Therefore $\lambda_{j}\|Ce_{j}\|=\|C\|_{\M_{n}}\lambda_{j}$ for each $j$.  However, if $\rank A\leq k$, then 
			$\rank C\leq k$. Thus there are at most $k$ vectors $e_{j}$ such that $\|Ce_{j}\|=\|C\|_{\M_{n}}$.  In
			particular, there are at least $n-k$ vectors $e_{j}$ such that $\|Ce_{j}\|<\|C\|_{\M_{n}}$.  Thus,
			$\lambda_{j}=0$ for those $n-k$ vectors $e_{j}$, $\rank P\leq k$, and so $\rank B\leq k$.
\end{proof}

\begin{remark}
			Note that the distance function $d_{\Phi}$ defined on $\T$ by 
			\[	d_{\Phi}(\zeta)\Mydef\|(\Phi-Q)(\zeta)\|_{\bS_{1}^{n}}	\] 
			equals $\sigma_{k}(\Phi)$ for almost every $\zeta\in\T$ and is therefore independent of the choice of the best 
			approximant $Q$.  This is an immediate consequence of Theorem \ref{bestApproxProp}.  A similar phenomenon occurs 
			in the case of matrix functions $\Phi\in L^{p}(\M_{n})$ for $2<p<\infty$.  We refer the reader to \cite{BNP} for details.
\end{remark}

\begin{cor}\label{sumInequality}
			Let $\Phi\in L^{\infty}(\M_{n})$ be an admissible matrix function and $1\leq k\leq n$.  If the Hankel-type 
			operator $H_{\Phi}^{\{k\}}$ has a maximizing vector and $\sigma_{k}(\Phi)=\sigma_{n}(\Phi)$, then
			\[	\sum_{j=0}^{k-1}s_{j}((\Phi-Q)(\zeta))\leq\sum_{j=0}^{k-1}t_{j}(\Phi)	\]
			for any best approximation $Q$ of $\Phi$ in $H^{\infty}(\M_{n})$ under the $L^{\infty}(\bS_{1}^{n})$-norm.
\end{cor}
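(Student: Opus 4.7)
The plan is to simply combine two results already established in this paper. First, I would apply Theorem \ref{bestApproxProp} (part 3) directly: since $\Phi \in L^{\infty}(\M_n)$, the Hankel-type operator $H_{\Phi}^{\{k\}}$ has a maximizing vector, $\cS_k(\Phi) = \cS_n(\Phi)$, and $Q$ is a best approximant to $\Phi$ under the $L^{\infty}(\bS_1^n)$-norm, the hypotheses of Theorem \ref{bestApproxProp} are met. Its conclusion (3) gives the pointwise identity
\[
\sum_{j=0}^{k-1} s_j\bigl((\Phi-Q)(\zeta)\bigr) = \cS_k(\Phi) \quad \text{for a.e. } \zeta \in \T.
\]

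Next, I would invoke the admissibility hypothesis on $\Phi$ to bring in Theorem \ref{SInequality}, which asserts
\[
\cS_k(\Phi) \le t_0(\Phi) + \ldots + t_{k-1}(\Phi).
\]
Chaining these two together immediately yields the desired inequality
\[
\sum_{j=0}^{k-1} s_j\bigl((\Phi-Q)(\zeta)\bigr) \le \sum_{j=0}^{k-1} t_j(\Phi) \quad \text{for a.e. } \zeta \in \T.
\]

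There is essentially no obstacle here, since the corollary is really a packaged consequence of the two theorems above it; the only thing to observe is that \emph{both} hypotheses are genuinely needed. The existence of the maximizing vector and $\cS_k(\Phi) = \cS_n(\Phi)$ are required to invoke Theorem \ref{bestApproxProp}, while admissibility is what allows the use of Theorem \ref{SInequality} (Theorem \ref{bestApproxProp} alone does not reference $t_j(\Phi)$). Once that bookkeeping is noted, the proof is a one-line concatenation.
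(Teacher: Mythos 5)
Your proposal is correct and is exactly the paper's argument: the paper proves this corollary by declaring it an immediate consequence of Theorems \ref{SInequality} and \ref{bestApproxProp}, which is precisely the two-step concatenation you spell out.
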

\begin{proof}
			This is an immediate consequence of Theorems \ref{SInequality} and \ref{bestApproxProp}.
\end{proof}

\begin{definition}
			A matrix function $\Phi\in L^{\infty}(\M_{n})$ is said to have \emph{order $\ell$} if $\ell$ is the smallest 
			number such that $H_{\Phi}^{\{\ell\}}$ has a maximizing vector and
			\[	\cS_{\ell}(\Phi)=\dist_{L^{\infty}(\bS_{1}^{n})}(\Phi, H^{\infty}(\M_{n})).	\]
			If no such number $\ell$ exists, we say that $\Phi$ is \emph{inaccessible}.
\end{definition}

The interested reader should compare this definition of ``order'' with the one made in \cite{BNP} for matrix functions 
in $L^{p}(\M_{n})$ for $2<p<\infty$.  Also, due to Corollary \ref{maxVectors}, it is clear that if $\Phi\in 
L^{\infty}(\M_{n})$ has order $\ell$, then the Hankel-type operator $H_{\Phi}^{\{k\}}$ has a maximizing vector and 
\[	\cS_{k}(\Phi)=\dist_{L^{\infty}(\bS_{1}^{n})}(\Phi, H^{\infty}(\M_{n}))	\] 
holds for each $k\geq \ell$.

\begin{thm}\label{traceIneqisEquality}
			Let $\Phi\in L^{\infty}(\M_{n})$ be an admissible matrix function of order $k$.  The following
			statements are equivalent.
			\begin{enumerate}
						\item	$Q\in H^{\infty}$ is a best approximant to $\Phi$ under the $L^{\infty}(\bS_{1}^{n})$-norm 
									and the functions
									\[	\zeta\mapsto s_{j}((\Phi-Q)(\zeta)), \;0\leq j\leq k-1, \]
									are constant almost everywhere on $\T$.
						\item	$Q$ is the superoptimal approximant to $\Phi$, $t_{j}(\Phi)=0$ for $j\geq k$, and 
									\[	\cS_{k}(\Phi)=t_{0}(\Phi)+\ldots+t_{k-1}(\Phi).	\]
			\end{enumerate}
\end{thm}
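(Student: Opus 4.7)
The proof splits into the two directions.

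For $(2) \Rightarrow (1)$: this is a direct computation. Assuming (2), $Q$ coincides with the unique superoptimal approximant $Q_*$, so admissibility yields $s_j((\Phi - Q)(\zeta)) = t_j(\Phi)$ a.e. for every $j \ge 0$; these singular-value functions are thus constants and, by the hypothesis $t_j(\Phi) = 0$ for $j \ge k$, vanish past index $k$. Consequently,
\[
\|\Phi - Q\|_{L^\infty(\bS_1^n)} = \sum_{j \ge 0} t_j(\Phi) = \sum_{j<k} t_j(\Phi) = \cS_k(\Phi) = \cS_n(\Phi) = \dist_{L^\infty(\bS_1^n)}(\Phi, H^\infty(\M_n)),
\]
where the penultimate equality is the hypothesis $\cS_k(\Phi) = \sum_{j<k} t_j(\Phi)$ and the last is the order-$k$ property. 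Hence $Q$ is a best $L^\infty(\bS_1^n)$-approximant whose first $k$ singular values are constants, which is (1).

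For $(1) \Rightarrow (2)$: Theorem \ref{bestApproxProp}, applicable because $\Phi$ has order $k$, gives $s_j((\Phi - Q)(\zeta)) = 0$ a.e. for $j \ge k$ and $\sum_{j<k} s_j((\Phi - Q)(\zeta)) = \cS_k(\Phi)$ a.e. Combined with (1), each $s_j((\Phi - Q)(\cdot))$ is a.e.\ constant; writing $c_j$ for these constants, $c_j = 0$ for $j \ge k$ and $c_0 + \cdots + c_{k-1} = \cS_k(\Phi)$. The target is $Q = Q_*$; once established, admissibility forces $c_j = t_j(\Phi)$ for every $j$, giving $t_j = 0$ for $j \ge k$ and $\cS_k(\Phi) = t_0(\Phi) + \cdots + t_{k-1}(\Phi)$, which is (2).

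By uniqueness of the superoptimal approximant for admissible $\Phi$, the identity $Q = Q_*$ reduces to checking $Q \in \Omega_j(\Phi)$ for every $j \ge 0$, which we do by induction. The inductive engine is: if $Q \in \Omega_{j-1}(\Phi)$, then the defining infimum yields $t_j(\Phi) \le \ess\sup s_j((\Phi - Q)(\zeta)) = c_j$. Once the chain $c_j \ge t_j(\Phi)$ is in place for $j = 0, 1, \ldots, k-1$, pairing with the opposite bound $c_0 + \cdots + c_{k-1} = \cS_k(\Phi) \le t_0(\Phi) + \cdots + t_{k-1}(\Phi)$ from Theorem \ref{SInequality} forces equality termwise, so $c_j = t_j(\Phi)$ for each $j < k$ and $Q \in \Omega_{k-1}(\Phi)$. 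For $j \ge k$, $c_j = 0$ together with $Q \in \Omega_{j-1}$ gives $0 \le t_j(\Phi) \le c_j = 0$, so $Q \in \Omega_j$ and the induction closes.

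The hard part is the base case $Q \in \Omega_0(\Phi)$, equivalently $c_0 = t_0(\Phi)$. The inequality $c_0 \ge t_0$ is immediate from $Q \in H^\infty$, but $c_0 \le t_0$ is delicate: without it the chain $c_j \ge t_j$ does not even start, since an arbitrary $Q \in H^\infty$ can fail to satisfy $c_j \ge t_j(\Phi)$ for $j \ge 1$. The intended route exploits a maximizing vector $\cF$ for $H_\Phi^{\{k\}}$: Theorem \ref{bestApproxProp}(1)--(2) identifies $Q\cF$ as a best $L^2(\bS_1^{n,k})$-approximant of $\Phi\cF$ and supplies the pointwise identity $s_j((\Phi - Q)(\zeta)\cF(\zeta)) = c_j\,\|\cF(\zeta)\|_{\M_{n,k}}$, encoding an alignment between $\cF$ and the top singular vectors of $\Phi - Q$. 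Comparing these with the analogous quantities for $(\Phi - Q_*)\cF$, whose singular-value profile is the constant sequence $(t_j)$, and invoking the uniqueness of the superoptimal approximant, should deliver the missing inequality $c_0 \le t_0$ and close the induction, yielding (2).
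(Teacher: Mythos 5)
Your $(2)\Rightarrow(1)$ direction is correct and coincides with the paper's. The problem is in $(1)\Rightarrow(2)$, and it is exactly the one you flag yourself: the induction ``$Q\in\Omega_{j-1}(\Phi)$ implies $t_j(\Phi)\le c_j$'' cannot be started, and in fact cannot be propagated either. To get $Q\in\Omega_j(\Phi)$ at each stage you need the reverse inequality $c_j\le t_j(\Phi)$, which you propose to extract from the sum bound $c_0+\cdots+c_{k-1}=\cS_k(\Phi)\le t_0(\Phi)+\cdots+t_{k-1}(\Phi)$; but that extraction only works once \emph{all} of the termwise bounds $c_j\ge t_j(\Phi)$, $0\le j\le k-1$, are already in hand (even knowing $c_0=t_0$ and $c_1\ge t_1$, the sum bound does not give $c_1\le t_1$, since $c_2,\dots,c_{k-1}$ could compensate). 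This is circular as written. Your proposed escape via the maximizing vector $\cF$ and a ``comparison with $(\Phi-Q_*)\cF$'' is not an argument: nothing in Theorem \ref{bestApproxProp} relates the singular values of $\Phi-Q$ to those of $\Phi-Q_*$, and the word ``should'' is doing all the work.

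The missing ingredient is a theorem about superoptimal approximation that the paper invokes silently: for an \emph{admissible} $\Phi$ one has $t_j(\Phi)\le\ess\sup_{\zeta\in\T}s_j((\Phi-F)(\zeta))$ for \emph{every} $F\in H^\infty(\M_n)$ and every $j\ge 0$, not merely for $F\in\Omega_{j-1}(\Phi)$ (due to Treil; see \cite{Tr} and Chapter 14 of \cite{Pe1}). With this, $t_j(\Phi)\le c_j$ holds for all $j<k$ with no induction at all; combined with $\sum_{j<k}c_j=\cS_k(\Phi)\le\sum_{j<k}t_j(\Phi)$ from Theorem \ref{bestApproxProp} and Theorem \ref{SInequality}, it forces $c_j=t_j(\Phi)$ termwise, hence $Q\in\Omega_{k-1}(\Phi)$; then part (4) of Theorem \ref{bestApproxProp} gives $s_j((\Phi-Q)(\zeta))=0$ a.e.\ for $j\ge k$, so $t_j(\Phi)=0$ for $j\ge k$ and $Q$ is the superoptimal approximant, exactly as in the rest of your outline. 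This is precisely the paper's route: the middle inequality in the displayed chain of its proof is this termwise minorization, summed over $j<k$.
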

\begin{proof}		
			We first prove that \emph{1} implies \emph{2}.  By Corollary \ref{sumInequality}, we have that, for almost every 
			$\zeta\in\T$,
			\[	\sum_{j=0}^{k-1}s_{j}((\Phi-Q)(\zeta))\leq\sum_{j=0}^{k-1}t_{j}(\Phi)
					\leq\sum_{j=0}^{k-1}\ess\sup_{\zeta\in\T}s_{j}((\Phi-Q)(\zeta))=\sum_{j=0}^{k-1}s_{j}((\Phi-Q)(\zeta)).	\]
			This implies that
			\[	t_{j}(\Phi)=\ess\sup_{\zeta\in\T}s_{j}((\Phi-Q)(\zeta))=s_{j}((\Phi-Q)(\zeta))\;\mbox{ for }0\leq j\leq k-1,	\]
			$Q\in\Omega_{k-1}(\Phi)$, and
			\[	\sum_{j=0}^{k-1}t_{j}(\Phi)=\sum_{j=0}^{k-1}s_{j}((\Phi-Q)(\zeta))=\cS_{k}(\Phi).	\]
			Moreover, Theorem \ref{bestApproxProp} gives that $s_{j}((\Phi-Q)(\zeta))=0$ a.e. on $\T$ for $j\geq k$, and so
			$t_{j}(\Phi)=0$ for $j\geq k$, as $Q\in\Omega_{k-1}(\Phi)$.  Hence, $Q$ is the superoptimal approximant to $\Phi$.
			
			Let us show that \emph{2} implies \emph{1}.  Clearly, it suffices to show that if \emph{2} holds, then $Q$ is a
			best approximant to $\Phi$ under the $L^{\infty}(\bS_{1}^{n})$-norm.  Suppose \emph{2} holds.  In this case, we
			must have that
			\[	\cS_{k}(\Phi)=\sum_{j=0}^{k-1}t_{j}(\Phi)=\sum_{j=0}^{k-1}s_{j}((\Phi-Q)(\zeta))
											=\|\Phi-Q\|_{L^{\infty}(\bS_{1}^{n})}.	\]
			Since $\Phi$ has order $k$, it follows that
			\[	\cS_{n}(\Phi)=\|\Phi-Q\|_{L^{\infty}(\bS_{1}^{n})}	\]
			and so the proof is complete.
\end{proof}

For the rest of this section, we restrict ourselves to admissible matrix functions $\Phi$ which are also very
badly approximable.  Recall that, in this case, the function $\zeta\mapsto s_{j}(\Phi(\zeta))$ equals $t_{j}(\Phi)$ a.e.
on $\T$ for $0\leq j\leq n-1$, as mentioned in Section \ref{superApproxSection}.  The next result follows at once from
Theorem \ref{traceIneqisEquality}.

\begin{cor}\label{mainCor}
			Let $\Phi$ be an admissible very badly approximable $n\times n$ matrix function of order $k$.  The zero matrix 
			function is a best approximant to $\Phi$ under the $L^{\infty}(\bS_{1}^{n})$-norm if and only if $t_{j}(\Phi)=0$ 
			for $j\geq k$ and
			\[	\cS_{k}(\Phi)=t_{0}(\Phi)+\ldots+t_{k-1}(\Phi).	\]
\end{cor}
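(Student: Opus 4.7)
The plan is to derive Corollary \ref{mainCor} as a direct specialization of Theorem \ref{traceIneqisEquality} with the choice $Q=\Zero$. Two observations are crucial. First, since $\Phi$ is very badly approximable, the zero matrix function lies in $\Omega_{\min\{n\}-1}(\Phi)$, so by the uniqueness of the superoptimal approximant for admissible matrix functions (recalled in Section \ref{superApproxSection}), $\Zero$ is in fact \emph{the} superoptimal approximant to $\Phi$. Second, the same admissibility-plus-very-bad-approximability property forces $s_{j}(\Phi(\zeta))=t_{j}(\Phi)$ for almost every $\zeta\in\T$ and every $0\leq j\leq n-1$, so the singular-value functions $\zeta\mapsto s_{j}((\Phi-\Zero)(\zeta))$ are automatically constant a.e.

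For the forward implication, assume $\Zero$ is a best approximant to $\Phi$ under the $L^{\infty}(\bS_{1}^{n})$-norm. Combined with the automatic constancy noted above, condition \emph{1} of Theorem \ref{traceIneqisEquality} holds with $Q=\Zero$. Applying that theorem produces $t_{j}(\Phi)=0$ for $j\geq k$ and $\cS_{k}(\Phi)=t_{0}(\Phi)+\ldots+t_{k-1}(\Phi)$, which is the desired conclusion.

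For the reverse implication, assume $t_{j}(\Phi)=0$ for $j\geq k$ and $\cS_{k}(\Phi)=t_{0}(\Phi)+\ldots+t_{k-1}(\Phi)$. Taking $Q=\Zero$, which is the superoptimal approximant by the very-badly-approximable hypothesis, we see that condition \emph{2} of Theorem \ref{traceIneqisEquality} is satisfied. The theorem then yields that $\Zero$ is a best approximant to $\Phi$ under the $L^{\infty}(\bS_{1}^{n})$-norm, completing the proof.

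There is no substantive obstacle here; the only thing to be careful about is verifying that the hypotheses of Theorem \ref{traceIneqisEquality} (admissibility and order $k$) are exactly what is imposed on $\Phi$ in the corollary, and that the extra data carried by each condition of that theorem (the constancy in \emph{1}, the identification of the superoptimal approximant in \emph{2}) come for free from very bad approximability. Both of these are immediate from the results already recorded in Section \ref{superApproxSection}, so the corollary truly does follow at once.
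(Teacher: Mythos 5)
Your proof is correct and matches the paper's intent exactly: the paper states that the corollary ``follows at once from Theorem \ref{traceIneqisEquality}'' after recalling that $s_{j}(\Phi(\zeta))=t_{j}(\Phi)$ a.e.\ for admissible very badly approximable $\Phi$, which is precisely the specialization $Q=\Zero$ you carry out. Your write-up simply makes explicit the two facts (uniqueness of the superoptimal approximant and a.e.\ constancy of the singular values) that the paper leaves implicit.
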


It is natural to question at this point whether or not the collection of admissible very badly approximable matrix functions
of order $k$ is non-empty.  It turns out that one can easily construct examples of admissible very badly approximable
matrix functions of order $k$ (see Examples \ref{simpleEx1} and \ref{simpleEx2}).  Theorem \ref{suffCondition} below
gives a simple sufficient condition for determining when a very badly approximable matrix function has order $k$.  We
first need the following lemma.

\begin{lemma}\label{suffCondLemma}
			Let $\Phi\in L^{\infty}(\M_{n})$.  Suppose there is $\Psi\in\cA_{k}^{n}$ such that
			\[	\int_{\T}\trace(\Phi(\zeta)\Psi(\zeta))\dm(\zeta)=\|\Phi\|_{L^{\infty}(\bS_{1}^{n})}.	\]
			Then $\Psi$ is a $k$-extremal function for $\Phi$, $\cS_{k}(\Phi)=\cS_{n}(\Phi)$, and the zero matrix function 
			is a best approximant to $\Phi$ under the $L^{\infty}(\bS_{1}^{n})$-norm.
\end{lemma}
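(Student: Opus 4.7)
The plan is to observe that the hypothesis pins down a long chain of a priori inequalities to a single value, forcing each step in the chain to be an equality. Each of the three conclusions then falls out by reading off which equality corresponds to which statement.

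Concretely, the first step is to write down the chain
\[
    \|\Phi\|_{L^{\infty}(\bS_{1}^{n})}
    = \int_{\T}\trace(\Phi(\zeta)\Psi(\zeta))\dm(\zeta)
    \leq \cS_{k}(\Phi)
    \leq \cS_{n}(\Phi)
    = \dist_{L^{\infty}(\bS_{1}^{n})}(\Phi, H^{\infty}(\M_{n}))
    \leq \|\Phi\|_{L^{\infty}(\bS_{1}^{n})}.
\]
The first equality is the hypothesis (noting that the right-hand side is a real number, so the integral agrees with its modulus). The first inequality is the definition of $\cS_{k}(\Phi)$ applied to $\Psi\in\cA_{k}^{n}$. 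The second inequality follows because any element of $\cA_{k}^{n}$ lies in $\cA_{n}^{n}$, as the rank-$k$ constraint is more restrictive than the rank-$n$ one. The next equality is the duality identity $(\ref{distanceFormula})$. The final inequality is just the observation that the zero function lies in $H^{\infty}(\M_{n})$, so the distance is dominated by $\|\Phi-\Zero\|_{L^{\infty}(\bS_{1}^{n})}$.

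Since the two ends of the chain coincide, every inequality in between is an equality. Reading off these equalities in turn: the equality $\int_{\T}\trace(\Phi\Psi)\dm = \cS_{k}(\Phi)$ is precisely the statement that $\Psi$ is a $k$-extremal function for $\Phi$; the equality $\cS_{k}(\Phi)=\cS_{n}(\Phi)$ is the second claim; and the equality $\dist_{L^{\infty}(\bS_{1}^{n})}(\Phi, H^{\infty}(\M_{n})) = \|\Phi\|_{L^{\infty}(\bS_{1}^{n})}$ says exactly that $\Zero$ attains the distance, i.e. $\Zero$ is a best approximant to $\Phi$ under the $L^{\infty}(\bS_{1}^{n})$-norm.

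There is no real obstacle here; the lemma is a pigeonholing of the hypothesis against the elementary bounds on $\cS_{k}(\Phi)$, $\cS_{n}(\Phi)$, and the distance to $H^{\infty}(\M_{n})$. The only thing worth double-checking when writing the proof is that the integral in the hypothesis is genuinely real and nonnegative (so that the absolute-value signs in the definition of $\cS_{k}(\Phi)$ can be dropped harmlessly), which is immediate since it equals the nonnegative real number $\|\Phi\|_{L^{\infty}(\bS_{1}^{n})}$.
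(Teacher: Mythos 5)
Your proposal is correct and follows essentially the same argument as the paper: both close the chain of inequalities from $\|\Phi\|_{L^{\infty}(\bS_{1}^{n})}$ through $\cS_{k}(\Phi)$ and the distance to $H^{\infty}(\M_{n})$ back to $\|\Phi\|_{L^{\infty}(\bS_{1}^{n})}$, forcing all intermediate inequalities to be equalities. Your explicit insertion of $\cS_{n}(\Phi)$ into the chain is a minor expository refinement of the paper's proof, not a different method.
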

\begin{proof}
		By the assumptions on $\Psi$, we have
		\[	\|\Phi\|_{L^{\infty}(\bS_{1}^{n})}=\int_{\T}\trace(\Phi(\zeta)\Psi(\zeta))\dm(\zeta)\leq\cS_{k}(\Phi).	\]
		On the other hand, 
		\[	\cS_{k}(\Phi)\leq\dist_{L^{\infty}(\bS_{1}^{n})}(\Phi,H^{\infty})\leq\|\Phi\|_{L^{\infty}(\bS_{1}^{n})}	\]
		always holds. Since all the previously mentioned inequalities are equalities, the conclusion follows.  
\end{proof}

\begin{thm}\label{suffCondition}
			Let $\Phi\in L^{\infty}(\M_{n})$ be an admissible very badly approximable matrix function.  Suppose 
			there is $\Psi\in\cA_{k}^{n}$ such that
			\[	\int_{\T}\trace(\Phi(\zeta)\Psi(\zeta))\dm(\zeta)=t_{0}(\Phi)+\ldots+t_{n}(\Phi).	\]
			If $t_{k-1}(\Phi)>0$, then $\Phi$ has order $k$ and the zero matrix function is a best approximant to 
			$\Phi$ under the $L^{\infty}(\bS_{1}^{n})$-norm.
\end{thm}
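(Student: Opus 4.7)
The plan is to use the hypothesis to reduce the problem to the setting of Lemma \ref{suffCondLemma}, which produces the needed extremal and $k$-extremal data, and then rule out smaller orders by invoking Theorem \ref{traceIneqisEquality} in the direction (1)$\Rightarrow$(2).

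First I would exploit the very badly approximable hypothesis: for a.e.\ $\zeta \in \T$ one has $s_j(\Phi(\zeta)) = t_j(\Phi)$ for every $j$, and hence
\[
\|\Phi\|_{L^{\infty}(\bS_{1}^{n})} = \sum_{j=0}^{n-1} t_{j}(\Phi).
\]
(Here I read the right-hand side of the displayed hypothesis as the sum of all the superoptimal singular values, since only $t_0,\dots,t_{n-1}$ exist.) Combined with the assumption on $\Psi$, this gives
\[
\int_{\T}\trace(\Phi(\zeta)\Psi(\zeta))\dm(\zeta) = \|\Phi\|_{L^{\infty}(\bS_{1}^{n})}.
\]
Now Lemma \ref{suffCondLemma} applies directly: $\Psi$ is a $k$-extremal function for $\Phi$, $\cS_{k}(\Phi) = \cS_{n}(\Phi)$, and the zero matrix function is a best approximant to $\Phi$ under the $L^{\infty}(\bS_{1}^{n})$-norm. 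By Theorem \ref{kExtMax}, the existence of a $k$-extremal function is equivalent to $H_{\Phi}^{\{k\}}$ having a maximizing vector. In particular, $\Phi$ has order at most $k$.

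The remaining step is to show that the order is exactly $k$, i.e.\ no smaller $\ell$ works. Suppose for contradiction that $\Phi$ has order $\ell < k$. Then $H_{\Phi}^{\{\ell\}}$ has a maximizing vector and $\cS_{\ell}(\Phi) = \cS_{n}(\Phi) = \dist_{L^{\infty}(\bS_{1}^{n})}(\Phi,H^{\infty}(\M_{n}))$. The zero function is a best approximant to $\Phi$ in the $L^{\infty}(\bS_{1}^{n})$-norm (established above), and since $\Phi$ is very badly approximable each function $\zeta \mapsto s_{j}((\Phi - 0)(\zeta)) = t_{j}(\Phi)$ is constant a.e.\ on $\T$ for $0 \leq j \leq \ell - 1$. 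Thus condition (1) of Theorem \ref{traceIneqisEquality} (with $k$ replaced by $\ell$) holds, and the implication (1)$\Rightarrow$(2) of that theorem forces $t_{j}(\Phi) = 0$ for $j \geq \ell$. Since $\ell \leq k-1$, this contradicts the standing assumption $t_{k-1}(\Phi) > 0$. Therefore $\Phi$ has order exactly $k$, and the conclusion follows.

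The only subtle step is the contradiction argument in the last paragraph, which requires invoking Theorem \ref{traceIneqisEquality} for the putative smaller order $\ell$; the rest is a clean bookkeeping application of Lemma \ref{suffCondLemma} and Theorem \ref{kExtMax}. I do not expect any serious technical obstacle beyond making sure the hypotheses of Theorem \ref{traceIneqisEquality} (which assumes order $k$) are applied to $\Phi$ with the trial value $\ell$, using that $\Phi$ was \emph{assumed} to have order $\ell$ in the contradiction hypothesis.
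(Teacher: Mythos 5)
Your proof is correct. The first half coincides with the paper's: both reduce to Lemma \ref{suffCondLemma} via the identity $\|\Phi\|_{L^{\infty}(\bS_{1}^{n})}=\sum_{j=0}^{n-1}t_{j}(\Phi)$ (which holds since $\Phi$ is admissible and very badly approximable), obtaining the $k$-extremal function, $\cS_{k}(\Phi)=\cS_{n}(\Phi)$, and the fact that $\Zero$ is a best approximant, hence order at most $k$. Where you diverge is in ruling out a smaller order $\ell<k$: you argue by contradiction, checking that with $Q=\Zero$ condition (1) of Theorem \ref{traceIneqisEquality} holds for the putative order $\ell$, and then use the implication (1)$\Rightarrow$(2) to force $t_{j}(\Phi)=0$ for $j\geq\ell$, contradicting $t_{k-1}(\Phi)>0$. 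That is legitimate (Theorem \ref{traceIneqisEquality} is established earlier and does not depend on the present statement, so there is no circularity), but it routes through the machinery of Theorem \ref{bestApproxProp}. The paper's minimality step is more elementary: it simply invokes the bound $\cS_{k-1}(\Phi)\leq t_{0}(\Phi)+\ldots+t_{k-2}(\Phi)$ from Theorem \ref{SInequality} and the strict inequality $t_{0}+\ldots+t_{k-2}<t_{0}+\ldots+t_{k-1}\leq\|\Phi\|_{L^{\infty}(\bS_{1}^{n})}=\cS_{k}(\Phi)$ to conclude $\cS_{\ell}(\Phi)\leq\cS_{k-1}(\Phi)<\cS_{n}(\Phi)$ for all $\ell<k$, so the defining condition of order already fails at the level of the norms, with no need to discuss maximizing vectors for $H_{\Phi}^{\{\ell\}}$. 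Both arguments are valid; the paper's is shorter and isolates exactly where the hypothesis $t_{k-1}(\Phi)>0$ enters.
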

\begin{proof}
			By the remarks preceding Corollary \ref{mainCor}, it is easy to see that
			\[	\|\Phi\|_{L^{\infty}(\bS_{1}^{n})}=t_{0}(\Phi)+\ldots+t_{n}(\Phi).	\]
			It follows from Lemma \ref{suffCondLemma} that $\Psi$ is a $k$-extremal function for $\Phi$, 
			$\cS_{k}(\Phi)=\cS_{n}(\Phi)$, and the zero matrix function is a best approximant to $\Phi$ 
			under the $L^{\infty}(\bS_{1}^{n})$-norm.  Thus $\|\Phi\|_{L^{\infty}(\bS_{1}^{n})}=\sigma_{k}(\Phi)$.
			Moreover, by Theorem \ref{SInequality},
			\[	\cS_{k-1}(\Phi)\leq t_{0}(\Phi)+\ldots+t_{k-2}(\Phi)<t_{0}(\Phi)+\ldots+t_{k-1}(\Phi)
					\leq\|\Phi\|_{L^{\infty}(\bS_{1}^{n})}.	\]
			Therefore $\cS_{k-1}(\Phi)<\cS_{k}(\Phi)$.
\end{proof}

\begin{remark}
			Notice that under the hypotheses of Theorem \ref{suffCondition}, one also obtains that $t_{k-1}(\Phi)$
			is the smallest non-zero superoptimal singular value of $\Phi$.  This is an immediate consequence of
			Corollary \ref{mainCor}.
\end{remark}

We now formulate the corresponding result for admissible very badly approximable unitary-valued matrix functions.
These functions are considered in greater detail in Section \ref{unitarySection}.

\begin{cor}\label{unitaryCor}
			Let $U\in L^{\infty}(\M_{n})$ be an admissible very badly approximable unitary-valued matrix function.  
			If there is $\Psi\in\cA_{n}^{n}$ such that
			\[	\int_{\T}\trace(U(\zeta)\Psi(\zeta))\dm(\zeta)=n,	\]
			then $U$ has order $n$ and the zero matrix function is a best approximant to $U$ under the 
			$L^{\infty}(\bS_{1}^{n})$-norm.
\end{cor}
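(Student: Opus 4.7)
The plan is to deduce this corollary directly from Theorem \ref{suffCondition} applied with $k=n$. The key observation is that when $U$ is unitary-valued, every value $U(\zeta)$ has all singular values equal to $1$, so $s_j(U(\zeta)) = 1$ for $0 \le j \le n-1$ and $s_j(U(\zeta)) = 0$ for $j \ge n$, for every $\zeta \in \T$.

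First I would recall that because $U$ is admissible and very badly approximable, the remarks in Section \ref{superApproxSection} give $s_j(U(\zeta)) = t_j(U)$ almost everywhere on $\T$ for every $j \geq 0$. Combining with the unitarity, this yields $t_j(U) = 1$ for $0 \le j \le n-1$ and $t_j(U) = 0$ for $j \ge n$. In particular, $t_0(U) + \cdots + t_n(U) = n$ and $t_{n-1}(U) = 1 > 0$.

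Next I would rewrite the hypothesis of the corollary in the form required by Theorem \ref{suffCondition}: the assumption
\[
    \int_{\T}\trace(U(\zeta)\Psi(\zeta))\dm(\zeta) = n
\]
is exactly the equation
\[
    \int_{\T}\trace(U(\zeta)\Psi(\zeta))\dm(\zeta) = t_0(U) + \cdots + t_n(U)
\]
in view of the values computed above, and $\Psi \in \cA_n^n$ is the admissible class for applying Theorem \ref{suffCondition} with $k = n$.

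Finally I would invoke Theorem \ref{suffCondition} with $k = n$: since $t_{k-1}(U) = t_{n-1}(U) = 1 > 0$, the theorem guarantees that $U$ has order $n$ and that the zero matrix function is a best approximant to $U$ in $H^{\infty}(\M_n)$ under the $L^{\infty}(\bS_1^n)$-norm, which is the desired conclusion. There is no real obstacle here; the content is entirely in verifying that the unitary hypothesis supplies all the singular-value data needed to put the assumption of Theorem \ref{suffCondition} in force.
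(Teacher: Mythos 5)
Your proof is correct and follows the paper's own argument exactly: the paper likewise derives the corollary as an immediate consequence of Theorem \ref{suffCondition} together with the observation that $t_{j}(U)=1$ for $0\leq j\leq n-1$. Your additional detail justifying this via $s_{j}(U(\zeta))=t_{j}(U)$ a.e.\ and the unitarity of $U(\zeta)$ is exactly the intended (and left implicit) verification.
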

\begin{proof}
			This is a trivial consequence of Theorem \ref{suffCondition} and the fact that
			\[	t_{j}(U)=1 \mbox{ for }0\leq j\leq n-1.\qedhere	\]
\end{proof}

We are now ready to state the main result of this section.

\begin{thm}\label{classification}
			Let $\Phi$ be an admissible very badly approximable $n\times n$ matrix function.  The following statements 
			are equivalent:
			\begin{enumerate}
						\item	$k$ is the smallest number for which there exists $\Psi\in\cA_{k}^{n}$ such that
									\[	\int_{\T}\trace(\Phi(\zeta)\Psi(\zeta))\dm(\zeta)=t_{0}(\Phi)+\ldots+t_{n-1}(\Phi);	\]
						\item	$\Phi$ has order $k$, $t_{j}(\Phi)=0$ for $j\geq k$ and
									\[	\cS_{k}(\Phi)=t_{0}(\Phi)+\ldots+t_{k-1}(\Phi).	\]
			\end{enumerate}
\end{thm}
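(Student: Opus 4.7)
The plan is to exploit the very bad approximability of $\Phi$, which gives the identification $\|\Phi\|_{L^\infty(\bS_1^n)} = t_0(\Phi)+\cdots+t_{n-1}(\Phi)$ (since $s_j(\Phi(\zeta))=t_j(\Phi)$ a.e. on $\T$), together with the general chain
\[
\cS_k(\Phi)\leq\cS_n(\Phi)=\dist_{L^\infty(\bS_1^n)}(\Phi,H^\infty(\M_n))\leq\|\Phi\|_{L^\infty(\bS_1^n)},
\]
and then to transfer information between $k$-extremal functions for $\Phi$ and maximizing vectors of $H^{\{k\}}_\Phi$ via Theorem \ref{kExtMax}.

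For the direction \emph{(1) implies (2)}, I would first observe that any $\Psi\in\cA_k^n$ achieving $\int_\T\trace(\Phi\Psi)\,\dm = t_0+\cdots+t_{n-1}$ forces equality throughout the chain above, so $\cS_k(\Phi)=\cS_n(\Phi)=\|\Phi\|_{L^\infty(\bS_1^n)}$ and $Q=0$ is a best $L^\infty(\bS_1^n)$-approximant to $\Phi$. Moreover $\Psi$ is $k$-extremal, so Theorem \ref{kExtMax} provides a maximizing vector for $H^{\{k\}}_\Phi$. Feeding these inputs into Theorem \ref{bestApproxProp} with $Q=0$ immediately yields $t_j(\Phi)=0$ for $j\geq k$ together with $\cS_k(\Phi)=t_0+\cdots+t_{k-1}$. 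What remains is to check that $k$ is precisely the order of $\Phi$: the above already shows the order is at most $k$, and if it were some $k_0<k$, Theorem \ref{kExtMax} would furnish $\Psi_0\in\cA_{k_0}^n$ with $\int_\T\trace(\Phi\Psi_0)\,\dm=\cS_{k_0}(\Phi)=\cS_n(\Phi)=t_0+\cdots+t_{n-1}$, contradicting the minimality of $k$ in (1).

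The reverse direction \emph{(2) implies (1)} is essentially this argument run backwards. Having order $k$ gives a maximizing vector of $H^{\{k\}}_\Phi$, so Theorem \ref{kExtMax} delivers a $k$-extremal $\Psi\in\cA_k^n$ with $\int_\T\trace(\Phi\Psi)\,\dm=\cS_k(\Phi)$; using $t_j(\Phi)=0$ for $j\geq k$ from (2), this integral equals $t_0+\cdots+t_{n-1}$, so $\Psi$ witnesses (1). For minimality, any $\Psi'\in\cA_{k'}^n$ with $k'<k$ realizing the same integral would, via the monotonicity $\cS_{k'}\leq\cS_n$ and Theorem \ref{kExtMax} applied to $\Psi'$, simultaneously yield $\cS_{k'}(\Phi)=\cS_n(\Phi)=\dist_{L^\infty(\bS_1^n)}(\Phi,H^\infty(\M_n))$ and a maximizing vector for $H^{\{k'\}}_\Phi$, contradicting $\Phi$ having order exactly $k$.

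The point requiring most care, rather than a genuine obstacle, is that the notion of ``order'' bundles two conditions (existence of a maximizing vector for $H^{\{k\}}_\Phi$ \emph{and} $\cS_k(\Phi)=\cS_n(\Phi)$), so in both directions the minimality of $k$ must be translated into, or extracted from, the simultaneous achievement of both. Theorem \ref{kExtMax} acts as the dictionary between $k$-extremal functions and maximizing vectors of $H^{\{k\}}_\Phi$, and is what makes these minimality arguments line up cleanly; the role of Theorem \ref{bestApproxProp} is then to convert the attained equality in the chain into the pointwise vanishing $s_j(\Phi(\zeta))=0$ for $j\geq k$ that separates the ``active'' superoptimal singular values from the trivial ones.
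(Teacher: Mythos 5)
Your proposal is correct and follows essentially the same route as the paper: both directions rest on forcing equality in the chain $\cS_k(\Phi)\leq\cS_n(\Phi)\leq\|\Phi\|_{L^{\infty}(\bS_{1}^{n})}=t_0(\Phi)+\cdots+t_{n-1}(\Phi)$ (this is Lemma \ref{suffCondLemma}), using Theorem \ref{kExtMax} as the dictionary between $k$-extremal functions and maximizing vectors, and extracting $t_j(\Phi)=0$ for $j\geq k$ together with $\cS_k(\Phi)=t_0(\Phi)+\cdots+t_{k-1}(\Phi)$ from Theorem \ref{bestApproxProp} (packaged in the paper as Corollary \ref{mainCor}). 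Your explicit contradiction arguments for the two minimality claims are just an unwound version of the paper's bookkeeping with the quantity $\kappa(\Phi)$.
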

\begin{proof}			
			Let
			\begin{align*}
						\kappa(\Phi)\Mydef\inf\,\{\; j\geq 0&: \mbox{there exists a } \Psi\in\cA_{j}^{n}\mbox{ such that }\\
												&\int_{\T}\trace(\Phi(\zeta)\Psi(\zeta))\dm(\zeta)=t_{0}(\Phi)+\ldots+t_{n-1}(\Phi)\,\}
			\end{align*}
			Clearly, $\kappa(\Phi)$ may be infinite for arbitrary $\Phi$.
						
			
			Suppose $\kappa=\kappa(\Phi)$ is finite.  Then Lemma \ref{suffCondLemma} implies that $\Phi$ has a 
			$\kappa$-extremal function, $\cS_{\kappa}(\Phi)=\cS_{n}(\Phi)$, and the zero matrix function 
			is a best approximant to $\Phi$ under the $L^{\infty}(\bS_{1}^{n})$-norm.  
			In particular, $\Phi$ has order $k\leq\kappa(\Phi)$, $t_{j}(\Phi)=0$ for $j\geq k$, and 
			\[	\cS_{k}(\Phi)=t_{0}(\Phi)+\ldots+t_{k-1}(\Phi),	\]
			by Corollary \ref{mainCor}.
			
			On the other hand, if $\Phi$ has order $k$, $t_{j}(\Phi)=0$ for $j\geq k$, and 
			\[	\cS_{k}(\Phi)=t_{0}(\Phi)+\ldots+t_{k-1}(\Phi),	\]
			then $\Phi$ has a $k$-extremal function $\Psi\in\cA_{k}^{n}$ such that
			\[	\int_{\T}\trace(\Phi(\zeta)\Psi(\zeta))\dm(\zeta)=\cS_{k}(\Phi)=t_{0}(\Phi)+\ldots+t_{k-1}(\Phi).	\]
			Since $t_{j}(\Phi)=0$ for $j\geq k$, it follows that
			\[	\int_{\T}\trace(\Phi(\zeta)\Psi(\zeta))\dm(\zeta)=t_{0}(\Phi)+\ldots+t_{n-1}(\Phi).	\]
			Thus $\kappa(\Phi)\leq k$.		
			
			Hence, if either $\kappa(\Phi)$ is finite or $\Phi$ satisfies \emph{2}, then $k=\kappa(\Phi)$.
\end{proof}

We end this section by illustrating existence of very badly approximable matrix functions of order $k$ by giving
two simple examples; a $2\times 2$ matrix function of order 2 and a $3\times 3$ matrix function of order 2.

\begin{ex}\label{simpleEx1}
			Let
			\begin{equation*}
						\Phi=	\frac{1}{\sqrt{2}}
									\left(	\begin{array}{cc}
																1	&	-1\\
																1	&	1
													\end{array}\right)
									\left(	\begin{array}{cc}
																\bar{z}^{2}	&	\Zero\\
																\Zero				&	\bar{z}
													\end{array}\right).
			\end{equation*}
			It is easy to see that $\Phi$ is a continuous (and hence admissible) unitary-valued very badly approximable 
			matrix function with superoptimal singular values $t_{0}(\Phi)=t_{1}(\Phi)=1$.  We claim that $\Phi$ has order 
			2.  Indeed, the matrix function
			\begin{equation*}
						\Psi=\left(	\begin{array}{cc}
															z^{2}	&	\Zero\\
															\Zero	&	z\\
												\end{array}\right)\frac{1}{\sqrt{2}}
									\left(	\begin{array}{cc}
																1	&	1\\
																-1	&	1
													\end{array}\right)
			\end{equation*}
			satisfies
			\[	\int_{\T}\trace(\Phi(\zeta)\Psi(\zeta))\dm(\zeta)=2,	\]
			and so $\Phi$ has order 2 by Corollary \ref{unitaryCor}.
\end{ex}

\begin{ex}\label{simpleEx2}
			Let $t_{0}$ and $t_{1}$ be two positive numbers satisfying $t_{0}\geq t_{1}$.  Let
			\begin{equation*}
						\Phi=\left(	\begin{array}{ccc}
															t_{0}\bar{z}^{a}	&	\Zero							&	\Zero\\
															\Zero							&	t_{1}\bar{z}^{b}	&	\Zero\\
															\Zero							&	\Zero							&	\Zero
												\end{array}\right)
			\end{equation*}
			where $a$ and $b$ are positive integers.  It is easy to see that $\Phi$ is a continuous (and hence admissible) 
			very badly approximable matrix function with superoptimal singular values $t_{0}(\Phi)=t_{0}$, $t_{1}(\Phi)=t_{1}$, 
			and $t_{2}(\Phi)=0$.  Again, we have that $\Phi$ has order 2. After all, the matrix function
			\begin{equation*}
						\Psi=\left(	\begin{array}{ccc}
															z^{a}	&	\Zero	&	\Zero\\
															\Zero	&	z^{b}	&	\Zero\\
															\Zero	&	\Zero	&	\Zero
												\end{array}\right)
			\end{equation*}
			satisfies
			\[	\int_{\T}\trace(\Phi(\zeta)\Psi(\zeta))\dm(\zeta)=t_{0}+t_{1}=t_{0}(\Phi)+t_{1}(\Phi)+t_{2}(\Phi),	\]
			and so $\Phi$ has order 2 by Theorem \ref{suffCondition}, since $t_{1}(\Phi)=t_{1}>0$.
\end{ex}

{\centering \section{Unitary-valued very badly approximable matrix functions}\label{unitarySection}}

We lastly consider the class $\cU_{n}$ of admissible very badly approximable unitary-valued matrix functions
of size $n\times n$ and provide a representation of any $n$-extremal function $\Psi$ for a function $U\in\cU_{n}$ 
such that
\begin{equation}\label{unitPsi}
			\int_{\T}\trace(U(\zeta)\Psi(\zeta))\dm(\zeta)=t_{0}(U)+\ldots+t_{n-1}(U)
\end{equation}
holds.  Note that for any such $U$ we have that $t_{j}(U)=1$ for $0\leq j\leq n-1$.

When studying functions in $\cU_{n}$, it turns out that Toeplitz operators on Hardy spaces are quite useful.
For a matrix function $\Phi\in L^{\infty}(\M_{m,n})$, we define the \emph{Toeplitz operator} $T_{\Phi}$ by
\[	T_{\Phi}f=\pP_{+}\Phi f,\,\mbox{ for }f\in H^{2}(\C^{n}),	\]
where $\pP_{+}$ denotes the orthogonal projection from $L^{2}(\C^{n})$ onto $H^{2}(\C^{n})$.

It is well-known that, for any function $U\in\cU_{n}$, the Toeplitz operator $T_{U}$ is Fredholm and $\ind T_{U}>0$.  
(As usual, for a Fredholm operator $T$, its index, $\ind T$, is defined by $\dim\ker T-\dim\ker T^{*}$.)  In particular,
the Toeplitz operator $T_{\det U}$ is Fredholm and
\[	\ind T_{\det U}=\ind T_{U}. 	\]
This latter fact can be easily deduced by considering any \emph{thematic factorization} of $U$.  We refer the reader 
to Chapter 14 in \cite{Pe1} for more information concerning functions in $\cU_{n}$ and thematic factorizations.

In order to state the main result of this section, we first discuss the notion of Blaschke-Potapov products.  A matrix 
function $B\in H^{\infty}(\M_{n})$ is called a finite \emph{Blaschke-Potapov product} if it admits a factorization of the form
\[	B=U B_{1}B_{2}\ldots B_{m},	\]
where $U$ is a unitary matrix and, for each $1\leq j\leq m$,
\[	B_{j}=\frac{z-\lambda_{j}}{1-\bar{\lambda}_{j}z}P_{j}+(I-P_{j})	\]
for some $\lambda_{j}\in\D$ and orthogonal projection $P_{j}$ on $\C^{n}$.  The \emph{degree} of the Blaschke-Potapov 
product $B$ is defined to be
\[	\deg B\Mydef \sum_{j=1}^{m}\rank P_{j}.	\]
It turns out that every invariant subspace $\sL$ of multiplication by $z$ on $H^{2}(\C^{n})$ of finite codimension is of 
the form $B H^{2}(\C^{n})$ for some Blaschke-Potapov product of finite degree $\codim \sL$.  A proof of this fact may be 
found in Lemma 2.5.1 of \cite{Pe1}.

We now state the main result.
\begin{thm}\label{unitaryThm}
			Suppose $U\in\cU_{n}$ has an $n$-extremal function $\Psi$ such that $(\ref{unitPsi})$ holds.  Then $\Psi$ admits
			a representation of the form
			\[	\Psi=zh^{2}\Theta,	\]
			where $h\in H^{2}$ is an outer function such that $\|h\|_{L^{2}}=1$ and $\Theta$ is a finite Blaschke-Potapov
			product.  Moreover, the scalar functions $\det (U\Theta)$ and $\trace(U\Theta)$ are admissible badly
			approximable functions that admit the factorizations
			\[	\det (U\Theta)=\bar{z}^{n}\frac{\bar{h}^{n}}{h^{n}}\,\mbox{ and }\;\trace(U\Theta)=n\bar{z}\frac{\bar{h}}{h}.	\]
\end{thm}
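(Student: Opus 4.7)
The plan is first to extract the pointwise structure of $\Psi$ from the equality hypothesis, then to construct $h$ and $\Theta$ by an outer/inner factorization in $H^1$, and finally to use the Fredholm property of $T_U$ (which comes with $U \in \cU_n$) to rigidify $\Theta$ into a finite Blaschke--Potapov product.

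First I would chase equalities in the chain of estimates from the proof of Theorem~\ref{SInequality}, with $\Phi=U$ and $k=n$. Since $U$ is unitary-valued, $s_j(U(\zeta))=1=t_j(U)$ for all $0\le j\le n-1$ at a.e.\ $\zeta$, and the hypothesis $\int_\T\trace(U\Psi)\dm = n$ forces equality at every step. This extracts (i) $\|\Psi\|_{L^1(\M_n)}=1$; (ii) all singular values of $\Psi(\zeta)$ coincide with $c(\zeta):=\|\Psi(\zeta)\|_{\M_n}$ a.e.; and (iii) $U(\zeta)\Psi(\zeta)$ is positive semi-definite a.e. From (ii), $\Psi(\zeta)=c(\zeta)V(\zeta)$ with $V(\zeta)$ unitary; combining with (iii), $U(\zeta)V(\zeta)$ is simultaneously unitary and positive semi-definite wherever $c(\zeta)>0$, forcing $UV=I_n$ there, hence $V=U^*$. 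Entrywise $H^1$-uniqueness rules out $c$ vanishing on a set of positive measure (else $\Psi\equiv 0$, contradicting~(i)), so $c>0$ a.e.\ and $\Psi=cU^*$ on $\T$.

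Next I would build the factorization. Using Lemma~\ref{factorPsi} to write $\Psi=RQ$ with $R,Q\in H^2(\M_n)$ and $\|R(\zeta)\|_{\M_n}^2=c(\zeta)$ a.e., the elementary bounds $\max_{ij}|r_{ij}|\le \|R\|_{\M_n}\le(\sum_{ij}|r_{ij}|^2)^{1/2}$ combined with the classical $L^1$-integrability of $\log|f|$ for nontrivial $f\in H^2$ yield $\log c\in L^1(\T)$. Let $h\in H^2$ be the outer function with $|h|^2=c$ on $\T$; then $\|h\|_{L^2}^2=\int_\T c\,\dm=\|\Psi\|_{L^1}=1$. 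Define $\Theta:=\Psi/(zh^2)$. Pointwise on $\T$, $\Theta=\bar z(\bar h/h)U^*$, which is unitary; on the other hand, since $\Psi/z\in H^1(\M_n)$ and $h^2$ is outer in $H^1$, each entry of $\Theta$ lies in the Smirnov class $N_+$, and the pointwise bound $\|\Theta\|_{\M_n}=1$ then places $\Theta\in H^\infty(\M_n)$. Thus $\Theta$ is a matrix inner function, $\Psi=zh^2\Theta$, and the identity $U\Theta=\bar z(\bar h/h)I_n$ (a scalar multiple of $I_n$!) immediately gives the formulas $\det(U\Theta)=\bar z^n\bar h^n/h^n$ and $\trace(U\Theta)=n\bar z\bar h/h$.

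The remaining task is to show that $\Theta$ is a finite Blaschke--Potapov product, and that $\det(U\Theta)$, $\trace(U\Theta)$ are admissible badly approximable. By the description of invariant subspaces of finite codimension quoted just before the theorem, it suffices to prove $\codim(\Theta H^2(\C^n))<\infty$. Here I would exploit the identity $T_U T_\Theta=T_{U\Theta}=T_{\bar z\bar h/h}\otimes I_n$, valid since $\Theta\in H^\infty$. Because $U\in\cU_n$, the block Toeplitz operator $T_U$ is Fredholm, while $T_\Theta$ is an isometry with cokernel $H^2(\C^n)\ominus\Theta H^2(\C^n)$; the Fredholm composition calculus applied to this identity should force $T_\Theta$ itself to be Fredholm, whence $\codim(\Theta H^2(\C^n))<\infty$ and $\det\Theta$ is a finite Blaschke product of that degree. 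Admissible bad approximability of the two scalar symbols then follows from the factorizations $T_{\det(U\Theta)}=T_{\det U}T_{\det\Theta}$ and its analogue for the trace: both Toeplitz factors are Fredholm ($T_{\det U}$ has positive index by $\ind T_{\det U}=\ind T_U$, and $T_{\det\Theta}$ is Fredholm now that $\det\Theta$ is a finite Blaschke product), and Fredholmness of a scalar Toeplitz operator with unimodular symbol is precisely the admissible-badly-approximable criterion. I expect the extraction of Fredholmness of $T_\Theta$ from the identity $T_U T_\Theta=T_{U\Theta}$ to be the main technical hurdle.
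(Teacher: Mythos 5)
Your first two stages track the paper's proof closely and are correct: chasing equalities forces $\Psi(\zeta)=\|\Psi(\zeta)\|_{\M_n}U^{*}(\zeta)$ a.e.\ (the paper records only that all singular values of $\Psi(\zeta)$ coincide and that every eigenvalue of $U(\zeta)\Psi(\zeta)$ equals $\|\Psi(\zeta)\|_{\M_n}$, but your positive-semidefiniteness observation is equivalent and arguably cleaner), and the outer/inner splitting $\Psi=zh^{2}\Theta$ is exactly the paper's. Your remark that $U\Theta=\bar z(\bar h/h)I_{n}$ is a \emph{scalar} multiple of the identity makes both displayed factorizations immediate, and your verification that $\log\|\Psi(\cdot)\|_{\M_n}\in L^{1}(\T)$ fills in a detail the paper leaves implicit.

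The genuine gap is the step you yourself flag as the ``main technical hurdle,'' and it is not a mere technicality: as written, your Fredholm argument is circular. From $T_{U}T_{\Theta}=T_{\omega}\otimes I_{n}$ with $\omega=\bar z\bar h/h$, $T_{U}$ Fredholm and $T_{\Theta}$ an isometry, you can conclude that $T_{\Theta}$ is Fredholm only if you already know the right-hand side is Fredholm; but with $T_{U}$ Fredholm and $T_{\Theta}$ an isometry, Fredholmness of the product $T_{U}T_{\Theta}$ is \emph{equivalent} to $\codim\Theta H^{2}(\C^{n})<\infty$, which is precisely what you are trying to prove. Everything therefore hinges on an independent proof that the scalar Toeplitz operator $T_{\omega}$ is Fredholm with positive index, and your proposal supplies none — indeed you try to recover admissible bad approximability of $\trace(U\Theta)$ and $\det(U\Theta)$ at the \emph{end} as a consequence, which inverts the logical order. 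This missing input is exactly what the paper establishes first: it shows directly that $\varphi:=\trace(U\Theta)=n\omega$ is an admissible badly approximable function, since $\|H_{\varphi}\|=n$ (pair $\varphi$ with the unit-norm test function $zh^{2}\in H^{1}_{0}$, giving $\int_{\T}\varphi(\zeta)\zeta h^{2}(\zeta)\,\dm(\zeta)=n=\|\varphi\|_{L^{\infty}}$) while $\|H_{\varphi}\|_{\rm e}\le n\|H_{U\Theta}\|_{\rm e}\le n\|H_{U}\|_{\rm e}<n$; the Fact quoted in the paper's proof then yields that $T_{\varphi}$, hence $T_{\omega}$, is Fredholm with positive index, and only then does one deduce, via $\det U=\overline{\det\Theta}\,\omega^{n}$ and Fredholmness of $T_{\det U}$, that $\det\Theta$ is a finite Blaschke product and $\Theta$ a finite Blaschke--Potapov product. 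A secondary flaw in your closing sentence: even granting that $T_{\det U}$ and $T_{\det\Theta}$ are Fredholm, the bad-approximability criterion also requires $\ind T_{\det(U\Theta)}=\ind T_{\det U}-\deg\det\Theta>0$, and positivity of that difference does not follow from anything you have set up; in the paper it is free because $\det(U\Theta)=\omega^{n}$ and $\ind T_{\omega}>0$ is already in hand.
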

\begin{proof}\label{nExtPsi}
			It follows from $(\ref{unitPsi})$ that all inequalities in $(\ref{traceIneqTwo})$ are equalities and so
			\begin{equation}\label{revealEq}
						\trace(U(\zeta)\Psi(\zeta))=\|U(\zeta)\Psi(\zeta)\|_{\bS_{1}^{n}}=n\|\Psi(\zeta)\|_{\M_{n}}
			\end{equation}
			holds for a.e. $\zeta\in\T$.  Since $U$ is unitary-valued, then
			\[	\|U(\zeta)\Psi(\zeta)\|_{\bS_{1}^{n}}=\|\Psi(\zeta)\|_{\bS_{1}^{n}},	\]
			and so
			\[	\|\Psi(\zeta)\|_{\bS_{1}^{n}}=n\|\Psi(\zeta)\|_{\M_{n}}	\]
			must hold for a.e. $\zeta\in\T$.  Therefore
			\[	s_{j}(\Psi(\zeta))=\|\Psi(\zeta)\|_{\M_{n}}\mbox{ for a.e. }\zeta\in\T,\, 0\leq j\leq n-1.	\]
			By the Singular Value Decomposition Theorem for matrices (or, more generally, the Schmidt Decomposition Theorem),
			it follows that
			\begin{equation}\label{PsiRep}
						\Psi(\zeta)=\|\Psi(\zeta)\|_{\M_{n}}V(\zeta)\mbox{ for a.e. }\zeta\in\T,
			\end{equation}
			for some unitary-valued matrix function $V$.  Let $h\in H^{2}$ be an outer function such that
			\[	|h(\zeta)|=\|\Psi(\zeta)\|_{\M_{n}}^{1/2}\mbox{ on }\T.	\]
			Consider also the matrix function $\Xi\Mydef h^{-2}\Psi$.  It follows from $(\ref{PsiRep})$ that
			\[	(\Xi^{*}\Xi)(\zeta)=\frac{1}{|h(\zeta)|^{4}}(\Psi^{*}\Psi)(\zeta)=I_{n}\mbox{ for a.e. }\zeta\in\T,	\]
			and so $\Xi$ is an inner function.  Thus $\Psi$ admits the factorization
			\[	\Psi=z h^{2}\Theta	\]
			for some $n\times n$ unitary-valued inner function $\Theta$ and an outer function $h\in H^{2}$ such that
			$\|h\|_{L^{2}}=1$.
			
			Note that the first equality in $(\ref{revealEq})$ indicates that the scalar function $\varphi\Mydef\trace(U\Theta)$
			satisfies
			\[	zh^{2}\varphi=n|h|^{2}\,\mbox{ on }\T,	\]
			or equivalently
			\[	\varphi=n\bar{z}\frac{\bar{h}}{h}.	\]
			Moreover, $\|H_{U\Theta}\|_{\rm e}\leq\|H_{U}\|_{\rm e}<1$, hence $\|H_{\varphi}\|_{\rm e}<n=\|H_{\varphi}\|$
			implying that $\varphi$ is an admissible badly approximable scalar function on $\T$.  We conclude that the Toeplitz 
			operator $T_{\varphi}$ is Fredholm and $\ind T_{\varphi}>0$ by the following well-known fact (c.f. Theorem 7.5.5 in \cite{Pe1}.)
			
			\textbf{Fact.}  \emph{Let $\varphi\in L^{\infty}$ be admissible.  Then $\varphi$ is badly approximable (i.e. the
			zero scalar function is a best approximant) if and only if $\varphi$ has constant modulus, the Toeplitz operator
			$T_{\varphi}$ is Fredholm, and $\ind T_{\varphi}>0$.}			 
			
			Returning to $(\ref{revealEq})$, it also follows that each eigenvalue of $U(\zeta)\Psi(\zeta)$ equals
			$\|\Psi(\zeta)\|_{\M_{n}}=|h(\zeta)|^{2}$ for a.e. $\zeta\in\T$ .  In particular, 
			\[	|h(\zeta)|^{2n}=\det U(\zeta)\Psi(\zeta)=(z^{n}h^{2n})(\zeta)\cdot\det U(\zeta)\cdot\det\Theta(\zeta)	\]
			holds a.e. $\zeta\in\T$.  By setting
			\[	\theta\Mydef\det\Theta\,\mbox{ and }\;u\Mydef\det U,	\]
			we have that $u$ admits the factorization
			\[	u=\bar{\theta}\bar{z}^{n}\frac{\bar{h}^{n}}{h^{n}}=\bar{\theta}\omega^{n},	\]
			where $\omega\Mydef\bar{z}\bar{h}/h=\varphi/n$.  Since the Toeplitz operator $T_{\omega}$ is Fredholm with
			positive index, $T_{u\bar{\omega}^{n}}$ is Fredholm as well.  Since $\ker T_{\theta}=\{\Zero\}$ and
			$u\bar{\omega}^{n}=\bar{\theta}$, then 
			\[	\dim(H^{2}\ominus\theta H^{2})=\dim\ker T_{\theta}^{*}=\dim\ker T_{\bar{\theta}}=\ind T_{\bar{\theta}}<\infty	\]
			and so $\theta$ is a finite Blaschke product.  The conclusion follows from the well-known lemma stated below.
\end{proof}

\begin{lemma}
		If $\Theta$ is a unitary-valued inner function such that $\det \Theta$ is a finite Blaschke product, then
		$\Theta$ is a Blaschke-Potapov product.
\end{lemma}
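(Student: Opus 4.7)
The plan is to reduce to Lemma 2.5.1 of \cite{Pe1} by showing that the invariant subspace $\Theta H^{2}(\C^{n})$ has finite codimension in $H^{2}(\C^{n})$, and then to recover $\Theta$ itself (up to a constant unitary factor) from the finite Blaschke--Potapov product produced by that lemma.

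First I would set $b=\det\Theta$, which by hypothesis is a finite Blaschke product of some degree $d$. The key observation is that the classical adjugate $\mathrm{adj}(\Theta)$ lies in $H^{\infty}(\M_{n})$ since its entries are polynomial expressions in the entries of $\Theta$. The identity $\Theta\cdot\mathrm{adj}(\Theta)=b\,I_{n}$ then yields
\[
	bH^{2}(\C^{n})=\Theta\cdot\mathrm{adj}(\Theta)\cdot H^{2}(\C^{n})\subseteq\Theta H^{2}(\C^{n})\subseteq H^{2}(\C^{n}),
\]
so $\codim\Theta H^{2}(\C^{n})\leq\codim bH^{2}(\C^{n})=nd<\infty$. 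By Lemma 2.5.1 of \cite{Pe1}, there is therefore a finite Blaschke--Potapov product $B$ with $\Theta H^{2}(\C^{n})=BH^{2}(\C^{n})$.

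Next I would show that $\Theta=BV$ for some constant unitary $V$. Since each column of $\Theta$ lies in $BH^{2}(\C^{n})$ and vice versa, one may write $\Theta=BF$ and $B=\Theta G$ with $F,G\in H^{\infty}(\M_{n})$; on $\T$ both $F=B^{*}\Theta$ and $G=\Theta^{*}B$ are unitary-valued and $FG=I$. By the matrix maximum modulus principle applied to $F$ and to $F^{-1}=G$, one has $\|F(z)\|,\|F(z)^{-1}\|\leq 1$ for every $z\in\D$, which forces all singular values of $F(z)$ to equal $1$, i.e., $F(z)$ is unitary throughout $\D$. Differentiating $F(z)^{*}F(z)=I_{n}$ with respect to $\bar z$ and using invertibility of $F(z)$ then forces $\partial_{z}F\equiv 0$, so $F=V$ is a constant unitary matrix.

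Finally, with $B=UB_{1}B_{2}\cdots B_{m}$ in Blaschke--Potapov form, one checks that $B_{j}V=V(V^{*}B_{j}V)$ and that $V^{*}P_{j}V$ is again an orthogonal projection on $\C^{n}$, so repeatedly sliding $V$ to the left yields $\Theta=BV=(UV)\tilde B_{1}\tilde B_{2}\cdots\tilde B_{m}$, where each $\tilde B_{j}$ is of elementary Blaschke--Potapov form. Thus $\Theta$ itself is a Blaschke--Potapov product. The main delicate point is the passage from the equality of invariant subspaces $\Theta H^{2}(\C^{n})=BH^{2}(\C^{n})$ to the factorization $\Theta=BV$ for a constant unitary $V$; the codimension estimate and the final bookkeeping with the Blaschke--Potapov factors are routine once Lemma 2.5.1 of \cite{Pe1} is available.
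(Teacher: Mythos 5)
Your proof is correct and follows essentially the same route as the paper: both establish $\det\Theta\cdot H^{2}(\C^{n})\subseteq\Theta H^{2}(\C^{n})$ (the paper via the inner function $\theta\Theta^{*}$, you via the adjugate, which is the same matrix function on $\T$), deduce that $\Theta H^{2}(\C^{n})$ has finite codimension, and invoke Lemma 2.5.1 of \cite{Pe1}. You merely make explicit the final step, left implicit in the paper, that an inner function whose range subspace equals $BH^{2}(\C^{n})$ must coincide with $B$ up to a constant unitary right factor.
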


\begin{proof}
			Let $\theta=\det\Theta$.  It is easy to see that $\Theta^{*}\theta$ is an inner function.  Since $B\Mydef\theta I_{n}$ 
			is a finite Blaschke-Potapov product and $BH^{2}(\C^{n})\subset\Theta H^{2}(\C^{n})$, then $\Theta H^{2}(\C^{n})$ has 
			finite codimension, and so $\Theta$ must be a finite Blaschke-Potapov product.
\end{proof}

\begin{cor}\label{twoByTwo}
			Suppose $U\in \cU_{2}$ has a $2$-extremal function $\Psi$ such that $(\ref{unitPsi})$ holds.  If $U$ is a
			rational matrix function such that $\ind T_{U}=2$, then $\Theta$ is a unitary constant on $\T$.
\end{cor}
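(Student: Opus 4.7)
The plan is to reduce the statement to showing that $\det\Theta$ is a Blaschke product of degree zero, and then extract this fact from a Fredholm index computation forced by the hypothesis $\ind T_{U} = 2$. By Theorem \ref{unitaryThm} applied with $n=2$, I have $\Psi = zh^{2}\Theta$ with $\Theta$ a finite Blaschke-Potapov product and
\[	\det(U\Theta) = \bar z^{2}\bar h^{2}/h^{2} = \omega^{2},\quad \omega\Mydef\bar z\bar h/h. \]
Setting $\theta \Mydef \det\Theta$, this rearranges to $\det U = \bar\theta\,\omega^{2}$. A direct inspection of each elementary factor shows $\det B_{j} = \bigl(\tfrac{z-\lambda_{j}}{1-\bar\lambda_{j}z}\bigr)^{\rank P_{j}}$, so $\deg(\det\Theta) = \deg\Theta$; hence it suffices to prove $\deg\theta = 0$, for then $\Theta$ has no non-trivial elementary factors and must collapse to a unitary constant.

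The mechanism will be Fredholm index multiplicativity for continuous unimodular Toeplitz symbols, so I first secure the required regularity. Since $U$ is rational and $\Theta$ (being a finite Blaschke--Potapov product) is rational, $\trace(U\Theta) = 2\omega$ is rational, so $\omega$ is continuous on $\T$. This forces $h$ not to vanish on $\T$; since $h$ is outer in $H^{2}$ and hence non-vanishing on $\overline{\D}$, the argument principle gives $\Delta\arg h = 0$, so the winding numbers satisfy $\mathrm{wind}(\bar h/h) = 0$ and $\mathrm{wind}(\omega) = \mathrm{wind}(\bar z) = -1$. This pins down $\ind T_{\omega} = 1$, hence $\ind T_{\omega^{2}} = 2$; and since $\theta$ is a finite Blaschke product with $\ker T_{\theta} = \{0\}$, I get $\ind T_{\bar\theta} = \dim(H^{2}\ominus\theta H^{2}) = \deg\theta$.

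The conclusion then follows from one line of index arithmetic. The paper records $\ind T_{\det U} = \ind T_{U} = 2$, so applying multiplicativity of $\ind T_{(\cdot)}$ for continuous unimodular symbols to the factorisation $\det U = \bar\theta\,\omega^{2}$ yields
\[	2 = \ind T_{\det U} = \ind T_{\bar\theta} + \ind T_{\omega^{2}} = \deg\theta + 2, \]
whence $\deg\theta = 0$, so $\theta$ is a unimodular constant and $\Theta$ is a unitary constant on $\T$. The main obstacle I anticipate is not the index bookkeeping itself but the preliminary regularity step: confirming that $\omega$ is continuous and consequently that $h$ cannot vanish on $\T$, which is precisely what legitimises both the argument-principle winding calculation and the additivity of indices under products.
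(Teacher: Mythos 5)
Your overall strategy is genuinely different from the paper's: you reduce everything to the scalar identity $\det U=\bar\theta\,\omega^{2}$ and a Fredholm index count, whereas the paper invokes the thematic factorization of the rational function $U$, writes $u\theta$ in two ways, and concludes that $h^{2}h_{0}^{-1}h_{1}^{-1}$ lies in $H^{1}$ together with its conjugate and is therefore constant. Your route is attractive because it avoids the thematic factorization entirely. However, it has a genuine gap at the ``regularity step'' you yourself flag as the main obstacle. The assertion that $h$ is ``outer in $H^{2}$ and hence non-vanishing on $\overline{\D}$'' is false: outer functions are zero-free only on the open disk, and their boundary values may vanish. For example $h=1-z$ is outer, vanishes at $1\in\T$, and satisfies $\bar h/h=-\bar z$, which is rational, unimodular and continuous on $\T$ but has winding number $-1$, not $0$. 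This example also shows that your preceding inference (``$\omega$ continuous on $\T$ forces $h$ not to vanish on $\T$'') is unjustified. Consequently the computation $\mathrm{wind}(\bar h/h)=0$, hence $\ind T_{\omega}=1$, is not established, and this is precisely the input your one line of index arithmetic needs. In effect you are assuming the $H^{\infty}$-invertibility of $h$, which in the paper's proof is a \emph{conclusion} of the argument (there $h^{2}$ turns out to be a constant multiple of $h_{0}h_{1}$ with $h_{0},h_{1}$ invertible in $H^{\infty}$), not a hypothesis.

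The gap is repairable within your own framework, and the repaired argument is arguably cleaner than the winding-number computation you attempted. The proof of Theorem \ref{unitaryThm} already shows that $\varphi=\trace(U\Theta)=2\omega$ is an admissible badly approximable scalar function, so by the Fact quoted there $T_{\varphi}$ (hence $T_{\omega}$) is Fredholm with $\ind T_{\omega}\geq 1$; you do not need to know the winding number of $h$ at all. Since $\bar\theta$ and $\omega$ are continuous unimodular rational functions, index additivity gives
\[
2=\ind T_{\det U}=\ind T_{\bar\theta}+2\,\ind T_{\omega}=\deg\theta+2\,\ind T_{\omega}\geq \deg\theta+2,
\]
whence $\deg\theta\leq 0$, so $\deg\theta=0$ and $\theta$ is a unimodular constant; your final step (constancy of $\det\Theta$ forces $\Theta$ to be a unitary constant, since $\deg\Theta=\deg\det\Theta$, or equivalently since $\theta\Theta^{*}$ is inner) then finishes the proof. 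With this substitution your argument is correct and provides a legitimate alternative to the paper's proof.
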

\begin{proof}
			Due to the results of \cite{PY}, $U$ admits a (thematic) factorization of the form
			\begin{equation*}
						U=
						\left(	\begin{array}{cc}
													\bar{w}_{1}	&	-w_{2}\\
													\bar{w}_{2}	&	w_{1}\\
										\end{array}\right)
						\left(	\begin{array}{cc}
													u_{0}	&	\Zero\\
													\Zero	&	u_{1}\\
										\end{array}\right)
						\left(	\begin{array}{cc}
													\bar{v}_{1}	&	\bar{v}_{2}\\
													-v_{2}			&	v_{1}\\
										\end{array}\right),
			\end{equation*}
			where $v_{1}, v_{2}, w_{1}$ and $w_{2}$ are scalar rational functions such that
			\[	|v_{1}|^{2}+|v_{2}|^{2}=|w_{1}|^{2}+|w_{2}|^{2}=1\mbox{ a.e. on }\T,	\]
			$v_{1}$ and $v_{2}$ have no common zeros in the unit disk $\D$, $w_{1}$ and $w_{2}$ have no common zeros in $\D$, 
			and $u_{0}$ and $u_{1}$ are scalar badly approximable rational unimodular functions on $\T$.  These 
			results may also be found in Sections 5 and 12 from Chapter 14 of \cite{Pe1}.
						
			Suppose $\Psi=zh^{2}\Theta$ is an $n$-extremal function for $U$ such that $(\ref{unitPsi})$ holds as in the
			conclusion of Theorem \ref{unitaryThm}.  Assume, for the sake of contradiction, that $\Theta$ is not a unitary
			constant.
			
			Since $u_{j}$ is a scalar badly approximable rational unimodular function on $\T$, it admits a factorization of the form
			\[	u_{j}=c_{j}\bar{z}^{k_{j}}\frac{\bar{h}_{j}}{h_{j}},	\]
			where $c_{j}$ is a unimodular constant, the function $h_{j}$ is $H^{\infty}$-invertible, and $k_{j}=\ind T_{u_{j}}$, for $j=0,1$.  
			In particular, we have
			\[	u\theta=c_{0}c_{1}\bar{z}^{2}\theta\frac{\bar{h}_{0}}{h_{0}}\frac{\bar{h}_{1}}{h_{1}},	\]
			as $k_{0}+k_{1}=\ind T_{U}=2$, where $\theta\Mydef\det\Theta$ and $u\Mydef\det U$.
			
			On the other hand, by Theorem \ref{unitaryThm}, 
			\[	u\theta=\bar{z}^{2}\frac{\bar{h}^{2}}{h^{2}}	\]
			and so the function $h^{2}h_{0}^{-1}h_{1}^{-1}$ and its conjugate
			\[	\frac{\bar{h}^{2}}{\bar{h}_{0}\bar{h}_{1}}=c_{0}c_{1}\theta\frac{h^{2}}{h_{0}h_{1}}	\]
			belong to $H^{1}$.  Therefore $h^{2}h_{0}^{-1}h_{1}^{-1}$ equals a constant and so $\theta$ equals a constant as well.
			Thus, the conclusion follows from the fact that $\theta \Theta^{*}$ is an inner function. 
\end{proof}

We end this section with an example to illustrate some of our main results.

\begin{ex}
Consider the matrix function
\begin{equation*}
			U	=\left(	\begin{array}{cc}
												\bar{z}	&	\Zero\\
												\Zero		&	\bar{z}									
									\end{array}\right)
						\frac{1}{\sqrt{2}}\left(	\begin{array}{cc}
												1		&	\bar{z}\\
												-z	&	1									
									\end{array}\right)
					=\frac{1}{\sqrt{2}}\left(	\begin{array}{cc}
																					\bar{z}	&	\bar{z}^{2}\\
																					-1			&	\bar{z}									
																		\end{array}\right).
\end{equation*}
Clearly, $U$ belongs to $\cU_{2}$ and it has superoptimal singular values $t_{0}(U)=t_{1}(U)=1$.

We ask the question, \emph{is there a $2$-extremal function $\Psi$ for $U$ such that $(\ref{unitPsi})$ holds with
$n=2$?}  Let us assume for the moment that such a function $\Psi$ exists.  In this case, Corollary \ref{twoByTwo} 
implies that $\Psi$ must be of the form $\Psi=zh^{2}\Theta$, where
\begin{equation*}
			\Theta=\left(	\begin{array}{cc}
													a	&	b\\
													c	&	d
										\end{array}\right)
\end{equation*}
is a unitary constant and $h$ is an outer function in $H^{2}$ such that $\|h\|_{L^{2}}=1$. Since
\[	\bar{z}^{2}\frac{\bar{h}^{2}}{h^{2}}=\det(U\Theta)=\bar{z}^{2}(ad-bc),	\]
it is easy to see that $h^{2}$ and its conjugate belong to $H^{1}$, and so $h^{2}$ is a constant of modulus 1.
Relabeling the scalars $a, b, c,$ and $d$, we may assume that $h^{2}$ equals 1 a.e. on $\T$.  Thus,
\[	2\bar{\zeta}=\trace(U(\zeta)\Theta(\zeta))=\frac{1}{\sqrt{2}}\left(a\bar{\zeta}+c\bar{\zeta}^{2}-b+d\bar{\zeta}\right)	\]
holds for a.e. $\zeta\in\T$, and so $b=c=0$ and $a+d=2\sqrt{2}$.  However, $\Theta$ is unitary valued so
it must be the case that $|a|=|d|=1$, and so
\[	2\sqrt{2}=a+d=|a+d|\leq|a|+|d|=2,	\]
which is a contradiction.  Thus no such $\Psi$ exists.  In particular, we must have that $\Phi$ does not have order 2 or
$\sigma_{2}(\Phi)<t_{0}(\Phi)+t_{1}(\Phi)=2$ by Theorem \ref{classification}.

Actually, we have already shown that \emph{the zero matrix function is not a best approximant to $U$ under the 
$L^{\infty}(\bS_{1}^{2})$ norm}, i.e. $\sigma_{2}(\Phi)<2$.  Indeed, we have 
\[	\dist_{L^{\infty}(\bS_{1}^{2})}(U, H^{\infty}(\M_{2}))<t_{0}(U)+t_{1}(U)=\|U\|_{L^{\infty}(\bS_{1}^{2})},	\]
by $(\ref{strictIneq})$.

We now ask, \emph{does $U$ have order 1, order 2, or is $U$ inaccessible?} It is clear that $U$ has a $1$-extremal function 
by Remark \ref{vectorCaseRem}.  In fact, it is easy to check that the matrix function
\begin{equation*}
			\Psi_{1}=\frac{z}{\sqrt{2}}\left(	\begin{array}{cc}
																							1	&	\Zero\\
																							z	&	\Zero
																				\end{array}\right)
\end{equation*}
defines a $1$-extremal function for $U$ and
\[	\cS_{1}(U)=\int_{\T}\trace(U(\zeta)\Psi_{1}(\zeta))\dm(\zeta)=\|H_{U}\|=t_{0}(U)=1.	\]
However, \emph{$U$ does not have order 1}.  Indeed, one can see that the matrix function 
\begin{equation*}
			\Psi_{*}=\frac{z}{\sqrt{3}}\left(	\begin{array}{cc}
																								1	&	\Zero\\
																								z	&	1
																				\end{array}\right)
\end{equation*}
belongs to $H^{1}_{0}(\M_{2})$, $\|\Psi_{*}\|_{L^{1}(\M_{2})}\leq 1$, and
\[	1<\sqrt{\frac{3}{2}}=\int_{\T}\trace(U(\zeta)\Psi(\zeta))\dm(\zeta)\leq\cS_{2}(U).	\]
Therefore, either $U$ has order 2 or $U$ is inaccessible.  This matter requires further investigation.
\end{ex}

\smallskip

\textbf{Acknowledgment.}  This article is based in part on the author's Ph.D. dissertation at Michigan State University.
Also, the author would like to thank Professor Vladimir V. Peller for communicating Theorem 
\ref{SInequality} and for suggesting corrections on earlier versions of this paper.

\end{document}